\documentclass[10pt]{amsart}

\usepackage{amsmath,amsthm,amsfonts, bbm, amssymb, hyperref,graphicx,color}

\parindent 0pt
\parskip 5pt

\newcommand{\ii}{\mathbbm{i}}
\newcommand{\rad}{\operatorname{rad}}

\newcommand{\C}{\mathbb{C}}

\newcommand{\Heis}{\mathbb{H}}

\newcommand{\Sieg}{\mathcal{S}}
\newcommand{\R}{\mathbb{R}}
\newcommand{\Z}{\mathbb{Z}}
\newcommand{\Q}{\mathbb{Q}}

\newcommand{\norm}[1]{\left\vert #1 \right \vert}	
\newcommand{\Norm}[1]{\left\Vert #1 \right \Vert}

\renewcommand{\Re}{\text{Re}}
\renewcommand{\Im}{\text{Im}}

\def\[#1\]{\begin{align}#1\end{align}}
\def\(#1\){\begin{align*}#1\end{align*}}

\newtheorem{thm}{Theorem}

\newtheorem{prop}[thm]{Proposition}
\newtheorem{lemma}[thm]{Lemma}
\newtheorem{cor}[thm]{Corollary}

\theoremstyle{definition}

\theoremstyle{definition}

\newtheorem{remark}[thm]{Remark}

\numberwithin{equation}{section}

\bibliographystyle{amsplain}

\title[Diophantine properties of Heisenberg continued fractions]{Diophantine properties of continued fractions on the Heisenberg group}
\author[J. Vandehey]{Joseph Vandehey}
\address{
Department of Mathematics\\
University of Georgia at Athens\\
Athens, GA 30602
}
\email{vandehey@uga.edu}

\begin{document}

\begin{abstract}
We provide several results on the diophantine properties of continued fractions on the Heisenberg group, many of which are analogous to classical results for real continued fractions. In particular, we show an analog of Khinchin's theorem and that convergents are also best approximants up to a constant factor of the denominator.
\end{abstract}

\date{\today}

\maketitle

\section{Introduction}

\subsection{Heisenberg continued fractions and a brief summary of results}\label{sec:firstintro}

We consider the Heisenberg group in its Siegel model, given by the space
\(
\Sieg := \{h=(u,v)\in \mathbb{C}^2: |u|^2-2\operatorname{Re} (v)=0\}
\)
with group law given by 
\(
(u_1,v_1)*(u_2,v_2)=(u_1+u_2,v_1+\overline{u_1}u_2+v_2) \qquad (u,v)^{-1} = (-u,\overline{v}).
\)
In a previous paper \cite{LV}, Lukyanenko and the author studied a continued fraction algorithm on this space, which we define here in the following way.

We denote the set of integer points in $\Sieg$ by $\Sieg(\Z)=\Sieg\cap \Z[\ii]^2$, and define the norm of a point by $\Norm{(u,v)}=\norm{v}^{1/2}$. For almost all points $h$, there exists a unique nearest integer $[h]\in \Sieg(\Z)$ that minimizes $\Norm{[h]^{-1}*h}$. Let $K_D\subset \Sieg$ be the set of points $h\in \Sieg$ such that $[h]=(0,0)$ (with some appropriate choice of boundary). We also consider the Koranyi inversion $\iota$, a conformal map on $\Sieg$ given by $\iota(u,v)=(-u/v,1/v)$, which is the analog in this setting of the inversion map $x\mapsto 1/x$ for real continued fractions.

Let $T:K_D\to K_D$ be an analog of the Gauss map given by
\(
Th=\begin{cases}
(0,0), & \text{if }h=(0,0),\\
[\iota h]^{-1}*\iota h, & \text{otherwise.}
\end{cases}
\)
This closely resembles the classical Gauss map, which looks like $Tx = x^{-1}-\lfloor x^{-1}\rfloor$. Continuing the analogy to other important continued fraction concepts, given a point $h\in \Sieg$ we define the forward iterates, $h_i\in K_D$, and the continued fraction digits, $\gamma_i\in \Sieg(\Z)$, for $h$ by 
\(
\gamma_0&=[h] & h_0 &= [h]^{-1}*h =\gamma_0^{-1}*h \\
\gamma_i &= [\iota h_{i-1}] & h_i&=T^i h_0= \gamma_{i}^{-1}*h_{i-1},
\)
with the sequence of continued fraction digits terminating if $h_i=(0,0)$. The Gauss map acts on the sequence of digits via a forward shift. The convergents are then given by
\(
\left( \frac{r_n}{q_n}, \frac{p_n}{q_n}\right) := \gamma_0*\left( \iota \gamma_1*\left( \iota\gamma_2*\dots \gamma_n\right)\right),
\)
where $r_n$, $p_n$, and $q_n$ are relatively prime Gaussian integers.

In \cite{LV}, it was shown that these definitions do give a good notion of a continued fraction algorithm. In particular, the sequence of convergents $(r_n/q_n,p_n/q_n)$ does always converge to $h$, and the sequence of continued fraction digits is finite if and only if $h$ is a rational point (i.e., $h\in \Sieg(\Q)=\Sieg \cap \Q[\ii]^2$). So for an irrational point $h\in \Sieg \setminus \Sieg(\Q)$ it would make sense to write it as
\[
h=\gamma_0\iota \gamma_1\iota \gamma_2\cdots,
\]
suppressing parentheses and * notation. Along the way, it was also shown that these continued fractions satisfy some classical relations which no other multi-dimensional continued fraction algorithm appears to satisfy (see Proposition \ref{prop:distance}). This suggests that continued fractions are a natural object of study on the Heisenberg group. 

In this paper, we move from analyzing the basic algorithm to studying what these continued fractions can tell us about the Heisenberg group itself. We will examine how these continued fractions on $\Sieg$ relate to diophantine approximation on $\Sieg$. We will give explicit bounds on how well an irrational point $h$ is approximated by its convergents $(r_n/q_n,p_n/q_n)$, we will show that the rate of approximation given by the convergents is as fast as possible for almost all points $h$, and we will show that the convergents are closely related to the \emph{best approximations} to $h$.

These results are surprising for a few reasons. First, the rate of convergence (namely the Diophantine approximation exponent) is not what we expect from classical work on multi-dimensional continued fraction. (This may be a consequence of the notion of distance we use.) Second, as Schweiger says, algorithms of this type ``are not likely to provide ``good" approximations which satisfy the $n$-dimensional Dirichlet property," \cite[pp.~vi]{schweiger} yet we obtain a precise rate of convergence result. (See also the discussion on pages 129--130 of \cite{schweiger}.) Finally, these results, surprising in both their statement and existence, are obtained using elementary techniques, some of which are variants on those used for classical continued fractions. 

A good overview of multi-dimensional continued fractions can be found in \cite{schweiger} and a slightly longer discussion of their approximation properties can be found in \cite{Hensley}.

\subsection{Diophantine approximation with regular continued fractions}
The connection between diophantine approximation of real numbers and regular continued fraction (RCF) expansions is extremely classical. We recall some of the definitions and results here.

 If an irrational number $x$ has RCF expansion
\(
x = a_0+\cfrac{1}{a_1+\cfrac{1}{a_2+\dots}}, \qquad a_i \in \mathbb{N},
\)
then we say 
\(
\frac{p_n}{q_n} = a_0+\cfrac{1}{a_1+\cfrac{1}{a_2+\dots+ \cfrac{1}{a_n}}}
\)
is the $n$th convergent. The rate of approximation is given by
\[
\label{eq:RCFapprox}
\left| x- \frac{p_n}{q_n} \right| < \frac{1}{q_n^2}.
\]
(See for example Theorem 9 in \cite{Khinchin}.)

This relates to a natural diophantine question: given an arithmetic function $f(q)$, how many solutions are there in rational numbers $p/q$ to the inequality
\(
\left| x- \frac{p}{q} \right| < f(q)?
\)
Continued fractions can be used to show the following powerful diophantine result.

\begin{thm}\label{thm:Hurwitz}[Hurwitz's Theorem \cite{Hurwitz}]
For every irrational number $x$ there exist infinitely many pairs of relatively prime integers $(p,q)$, such that
\[\label{eq:hurwitz}
\left| x- \frac{p}{q} \right| < \frac{1}{\sqrt{5}} \frac{1}{q^2}.
\]
The constant $1/\sqrt{5}$ here is known as the Hurwitz constant and is best possible.
\end{thm}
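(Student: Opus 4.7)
The plan is to prove Hurwitz's theorem by a contradiction argument on three consecutive convergents of the regular continued fraction expansion of $x$. The infinitude of convergents together with the approximation bound \eqref{eq:RCFapprox} already gives infinitely many rationals $p/q$ with $|x-p/q|<1/q^2$; I want to sharpen the constant from $1$ to $1/\sqrt{5}$ by showing that among \emph{any} three consecutive convergents, at least one must satisfy the stronger inequality \eqref{eq:hurwitz}.

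First I would record the standard identity
\(
\frac{p_{n+1}}{q_{n+1}}-\frac{p_n}{q_n}=\frac{(-1)^n}{q_n q_{n+1}},
\)
together with the fact that consecutive convergents lie on opposite sides of $x$; combined these yield
\(
\left|x-\frac{p_n}{q_n}\right|+\left|x-\frac{p_{n+1}}{q_{n+1}}\right|=\frac{1}{q_n q_{n+1}}.
\)
Now suppose for contradiction that three consecutive convergents $p_{n-1}/q_{n-1},p_n/q_n,p_{n+1}/q_{n+1}$ all fail \eqref{eq:hurwitz}. Applying the displayed identity twice and rearranging gives
\(
\frac{q_n}{q_{n+1}}+\frac{q_{n+1}}{q_n}\le \sqrt{5},\qquad \frac{q_{n-1}}{q_n}+\frac{q_n}{q_{n-1}}\le\sqrt{5}.
\)
Setting $r=q_{n+1}/q_n$, the quadratic $r^2-\sqrt{5}\,r+1\le 0$ forces $r\le \phi$, where $\phi=(1+\sqrt{5})/2$ is the golden ratio; the analogous conclusion holds for $q_n/q_{n-1}$.

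The contradiction now comes from the recurrence $q_{n+1}=a_{n+1}q_n+q_{n-1}$ with $a_{n+1}\ge 1$: this yields
\(
\frac{q_{n+1}}{q_n}\ge 1+\frac{q_{n-1}}{q_n}\ge 1+\frac{1}{\phi}=\phi,
\)
so combined with the upper bound $q_{n+1}/q_n\le\phi$ we would have $q_{n+1}/q_n=\phi$, impossible since the left-hand side is rational. This proves that infinitely many convergents satisfy \eqref{eq:hurwitz}; coprimality of $p_n$ and $q_n$ is standard for RCF convergents.

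The step I expect to need the most care is the strict-versus-non-strict inequality bookkeeping: \eqref{eq:hurwitz} is a strict inequality, so the contradictory assumption must be $|x-p_j/q_j|\ge 1/(\sqrt 5 q_j^2)$, and one has to check that the chain of inequalities above can be made consistently strict or else produce the stated equality $q_{n+1}/q_n=\phi$. Finally, to show that $1/\sqrt{5}$ is sharp I would separately consider $x=\phi$ itself, for which $|\phi-p/q|<c/q^2$ has only finitely many solutions for any $c<1/\sqrt{5}$, by a direct computation using $\phi^2=\phi+1$ and the fact that the nonzero integer $p^2-pq-q^2$ has absolute value at least $1$.
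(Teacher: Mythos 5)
Your proof is correct and is the classical three-consecutive-convergents argument (essentially Theorem~20 in Khinchin's book), which is precisely the route the paper points to in the sentence immediately following the theorem statement. The paper cites Hurwitz's theorem as classical background and does not reprove it, so your self-contained argument --- including the irrationality-of-$\phi$ resolution of the strict-versus-nonstrict bookkeeping and the standard sharpness computation for $x=\phi$ via $|p^2-pq-q^2|\ge 1$ --- is consistent with and fills in what the paper leaves implicit.
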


In fact, at least one out of every three consecutive RCF convergents must satisfy \eqref{eq:hurwitz} (Theorem 20 in \cite{Khinchin}). Markov also showed that the constant in Hurwitz's theorem could be improved if one removed a countable set of irrational numbers that have similar continued fraction expansions, see \cite{Cassels, Nicholls}.

From \eqref{eq:RCFapprox} and Hurwitz's Theorem, we see that continued fraction convergents make for very good approximations. The next two theorems show (respectively) that any sufficiently good rational approximation must itself be an RCF convergent, and that the order of approximation given by RCF convergents is, in essence, best possible for almost all real numbers.

\begin{thm}\label{thm:Legendre}[Legendre's Theorem, Theorem 19 in \cite{Khinchin}]
If there exist relatively prime integers $p$ and $q$ with $q>0$ and
\(
\left| x- \frac{p}{q} \right| < \frac{1}{2q^2},
\)
then the fraction $p/q$ must be a convergent for $x$ (i.e., $p/q=p_n/q_n$ for some $n$). The constant $1/2$ here is known as the Legendre constant and is best possible.
\end{thm}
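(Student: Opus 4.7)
I will show that $p/q$ is a convergent of $x$ by locating its position in the RCF of $x$ from below. Since $p/q$ is rational, expand it as a finite continued fraction $p/q = [a_0; a_1, \ldots, a_n]$, and use the standard flexibility $[a_0;\ldots,a_n] = [a_0;\ldots,a_n-1,1]$ to choose the parity of $n$ so that $(-1)^n = \operatorname{sgn}(x - p/q)$. Denote the convergents of this expansion by $p_k/q_k$, so that $p_n/q_n = p/q$ in lowest terms; note $q_{n-1} < q_n$ whenever $n \geq 1$, and the $n=0$ case can be disposed of directly (with $q_{-1}=0$).

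Next, define a positive real (or infinite) $\omega$ by the condition $x = (\omega p_n + p_{n-1})/(\omega q_n + q_{n-1})$; this $\omega$ is what one would obtain by attempting to extend the continued fraction expansion of $p/q$ one more step so as to recover $x$. Using the classical identity $p_n q_{n-1} - p_{n-1} q_n = (-1)^{n-1}$ together with the sign choice above, a direct calculation gives
\(
\left| x - \frac{p_n}{q_n}\right| = \frac{1}{q_n(\omega q_n + q_{n-1})}.
\)
Combining this with the hypothesis $|x - p/q| < 1/(2q_n^2)$ yields $\omega q_n + q_{n-1} > 2q_n$, hence $\omega > 2 - q_{n-1}/q_n > 1$.

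Since $\omega > 1$, it admits its own RCF expansion $\omega = [a_{n+1}; a_{n+2}, \ldots]$ with $a_{n+1}\geq 1$. Substituting back into the defining relation for $x$, the full RCF of $x$ is $[a_0; a_1, \ldots, a_n, a_{n+1}, \ldots]$, for which $p/q = p_n/q_n$ is manifestly the $n$th convergent. Sharpness of the constant $1/2$ can be established by exhibiting explicit pairs $(x, p/q)$, typically built from mediants $(p_{k-1}+p_k)/(q_{k-1}+q_k)$ of adjacent convergents of some carefully chosen irrational $x$, for which $|x - p/q|$ can be made arbitrarily close to $1/(2q^2)$ while $p/q$ itself fails to be a convergent. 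The bulk of the work is packed into the identity in the second paragraph; the parity-matching for $n$ is the only real subtlety in setting it up, and establishing the sharpness of $1/2$ is the more delicate auxiliary claim.
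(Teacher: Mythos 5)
The paper does not prove this theorem. It is classical background, cited directly as Theorem 19 of Khinchin's \emph{Continued Fractions}, and is included only to motivate the Heisenberg-group analogues developed later (Theorems \ref{thm:truediophantineexponent}, \ref{thm:diophantineexponent}, \ref{thm:bestapproximant}); there is no proof in the paper to compare your proposal against.

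That said, your proposal is the standard textbook proof of Legendre's theorem and it is correct. You expand $p/q=[a_0;a_1,\dots,a_n]$, use the ambiguity $[a_0;\dots,a_n]=[a_0;\dots,a_n-1,1]$ to fix the parity of $n$ so that $(-1)^n=\operatorname{sgn}(x-p/q)$, define $\omega$ by $x=(\omega p_n+p_{n-1})/(\omega q_n+q_{n-1})$, and use $p_nq_{n-1}-p_{n-1}q_n=(-1)^{n-1}$ to obtain $|x-p_n/q_n|=1/(q_n(\omega q_n+q_{n-1}))$; the hypothesis then forces $\omega q_n+q_{n-1}>2q_n$, hence $\omega>1$, and expanding $\omega$ as an RCF grafts onto the expansion of $p/q$ to exhibit $p/q$ as the $n$th convergent of $x$. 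One tiny imprecision: you assert $q_{n-1}<q_n$ for all $n\geq 1$, but in general one only has $q_{n-1}\leq q_n$ (equality can occur for $n=1$, $a_1=1$). This does not harm the argument, since $\omega>2-q_{n-1}/q_n\geq 1$ already suffices; moreover, for $q>1$ the expansion you end up with does in fact satisfy $q_{n-1}<q_n$. The sketch of sharpness via mediants is the right idea and is also standard.
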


\begin{thm}\label{thm:Khinchin}[Khinchin's Theorem, Theorem 32 in \cite{Khinchin}]
Suppose $\phi:[1,\infty)\to [0,\infty)$ is a function such that $x \phi(x)$ is non-increasing.  Then for almost all real numbers $x$, there are infinitely (respectively, finitely) many solutions in integers $p,q$ to the diophantine inequality
\(
\left| x- \frac{p}{q} \right| < \frac{\phi(q)}{q}
\)
if the integral
\(
\int_1^\infty \phi(x) \ dx
\)
diverges (resp., converges).
\end{thm}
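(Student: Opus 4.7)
The plan is to treat the convergence and divergence halves separately, both via Borel--Cantelli style arguments; by periodicity we may restrict to $x\in[0,1]$.

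\emph{Convergence case.} For each $q\ge 1$, let $A_q\subset[0,1]$ be the union of the intervals $(p/q-\phi(q)/q,\,p/q+\phi(q)/q)$ as $p$ ranges over integers with $p/q\in[0,1]$. Then the Lebesgue measure of $A_q$ is at most $2\phi(q)+O(\phi(q)/q)$. The hypothesis that $x\phi(x)$ is non-increasing implies, by a routine comparison of sum and integral, that $\sum_q\phi(q)$ and $\int_1^\infty\phi(x)\,dx$ converge together. Hence when the integral converges, $\sum_q|A_q|<\infty$ and the first Borel--Cantelli lemma yields that almost every $x$ lies in only finitely many $A_q$, as required.

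\emph{Divergence case.} This half is substantially harder, since the events $\{x\in A_q\}$ are neither independent nor monotone in $q$. I would reduce the problem to one about convergents of the regular continued fraction expansion of $x$. Using the standard identity $|x-p_n/q_n|=1/(q_n(q_{n+1}+q_n\xi_{n+1}))$ with $\xi_{n+1}=T^{n+1}x\in[0,1)$, together with Theorem \ref{thm:Legendre}, the inequality $|x-p/q|<\phi(q)/q$ having infinitely many rational solutions is equivalent, up to multiplicative constants absorbable into $\phi$, to the partial quotient bound $a_{n+1}\gtrsim 1/(q_n^2\phi(q_n))$ holding for infinitely many $n$. One then partitions $[0,1]$ into cylinder sets according to the length-$n$ prefix $(a_1,\dots,a_n)$ of the continued fraction expansion, and on each cylinder estimates the Lebesgue measure of the points whose next partial quotient exceeds the prescribed threshold. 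The divergence hypothesis translates into divergence of the resulting series of measures.

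\emph{Main obstacle.} The crux is establishing enough independence among the cylinder events to apply a second Borel--Cantelli-type lemma, since consecutive convergents share information about $x$. The standard route is to invoke the exponential mixing of the Gauss map $T$ and its invariant Gauss measure $d\mu=dx/((1+x)\log 2)$, which is equivalent to Lebesgue measure and, via Kuzmin's theorem, controls how quickly the conditional distribution of $T^nx$ forgets its dependence on the first $n$ partial quotients. With this quasi-independence estimate in hand, the divergent Borel--Cantelli conclusion closes the argument.
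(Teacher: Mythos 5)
This is not a theorem the paper proves: it is stated as classical background and attributed to Theorem~32 of Khinchin's book \cite{Khinchin}, so there is no in-paper proof to compare against. What you have written is therefore a free-standing sketch of the classical argument, and I will assess it on those terms.

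Your convergence half is essentially complete and correct: the measure of $A_q$ is $O(\phi(q))$, the hypothesis that $x\phi(x)$ is non-increasing forces $\phi$ itself to be non-increasing (for $x<y$, $\phi(y)\le(x/y)\phi(x)$), so $\sum_q\phi(q)$ and $\int_1^\infty\phi$ converge together, and the first Borel--Cantelli lemma finishes it.

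Your divergence half is an outline rather than a proof, and a few points deserve care. First, Legendre's theorem is not actually needed here: to produce infinitely many solutions it suffices to exhibit infinitely many good convergents, and the converse direction (good approximations are convergents) plays no role in the divergence case. Second, the threshold $a_{n+1}\gtrsim 1/(q_n^2\phi(q_n))$ depends on the cylinder through $q_n$, so the ``series of measures'' you gesture at is not a single series but a family indexed by cylinders; turning the divergence of $\int\phi$ into divergence of the relevant conditional series is exactly where the monotonicity of $x\phi(x)$ is used, and you do not address it. Third, you reach for heavier machinery than is needed. The quasi-independence required is the elementary fact that, conditioned on any cylinder $(a_1,\dots,a_n)$, the Lebesgue probability that $a_{n+1}\ge k$ is $\asymp 1/k$ with absolute constants; this follows directly from the formula for the length of a cylinder interval $\bigl|\Delta(a_1,\dots,a_n)\bigr|=1/\bigl(q_n(q_n+q_{n-1})\bigr)$ and does not require exponential mixing of the Gauss map or Kuzmin's theorem. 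With that estimate one applies a Paley--Zygmund/second-moment or quasi-independent Borel--Cantelli argument. As it stands, the key step is named but not carried out, so the divergence direction remains a gap.

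Finally, note that the paper invokes this theorem purely as motivation for the Heisenberg analogue (Theorem~\ref{thm:diophantineexponent}), whose proof proceeds by a direct convergent-series Borel--Cantelli count of rational points and does not pass through continued-fraction cylinder sets at all; so even at the level of ideas your divergence sketch is farther from what the paper actually does than its own (much more elementary) convergence-type argument.
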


Khinchin's Theorem implies that for any $\epsilon>0$ and $C>0$ there are only finitely many solutions in relatively prime integers $p,q,$ to the inequality
\(
\left| x- \frac{p}{q} \right| < \frac{C}{q^{2+\epsilon}}
\)
for almost all $x$. So the exponent of $2$ in Hurwitz's Theorem is also best possible.

\begin{remark}
In higher dimensions, we consider $x=(x_1,x_2,\dots,x_n)\in \mathbb{R}^n$, and say a vector $(q,p_1,\dots,p_n)\in \mathbb{Z}^{n+1}$, $q\ge 1$ is an approximation of order $\eta$ to $x$ if \( \left| x_i - \frac{p_i}{q}\right| < q^{-\eta}, \quad 1\le i \le n.\) We let $\eta(x)$ denote the supremum over all $\eta$ for which the above inequality has infinitely many solutions. For almost all $x$, we have $\eta(x)=(n+1)/n$. \cite[Theorem 46]{schweiger}
\end{remark}

All the theorems above imply that convergents are of roughly the best \emph{order} of approximation, but in fact, the strength of continued fractions goes even deeper. We say a fraction $p/q$ in lowest terms is a \emph{best approximation of the first kind} to an irrational number $x$ if
\(
\left| x- \frac{p}{q} \right| < \left| x- \frac{p'}{q'} \right|  \text{ for all } p',q'\in \mathbb{Z}\text{ with }  q' <q.
\)
and a \emph{best approximation of the second kind} if
\(
|qx-p|< |q'x-p'| \text{ for all } p',q'\in \mathbb{Z}\setminus\{0\}\text{ with }  q' <q.
\)
Note that being a best approximations of the second kind is a stronger property than being a best approximation of the first kind.

\begin{thm}[See Theorem II in \cite{Cassels}]
For an irrational number $x$, best approximations of the second kind are also RCF convergents and vice-versa, with at most one exception.
\end{thm}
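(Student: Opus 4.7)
The plan is to prove both directions from a single algebraic identity: because $p_n q_{n+1} - p_{n+1} q_n = \pm 1$, any pair of integers $(p', q')$ can be written uniquely as $p' = a\,p_n + b\,p_{n+1}$, $q' = a\,q_n + b\,q_{n+1}$ with $a, b \in \mathbb{Z}$. This lets us compare $|q' x - p'|$ directly against the convergent errors $|q_n x - p_n|$ and $|q_{n+1} x - p_{n+1}|$.

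For the forward direction, I would fix $n \geq 1$ and let $(p', q')$ satisfy $0 < q' < q_{n+1}$ with $(p', q') \neq (p_n, q_n)$. First rule out $a = 0$ (since then $q' = b\,q_{n+1} \geq q_{n+1}$), so $|a| \geq 1$. A brief sign check shows that when both $a$ and $b$ are nonzero, the constraint $0 < q' < q_{n+1}$ forces $a$ and $b$ to have opposite signs; meanwhile, the classical fact that consecutive convergents straddle $x$ forces $q_n x - p_n$ and $q_{n+1} x - p_{n+1}$ to have opposite signs as well. Combining these, the two terms in the expansion line up in sign, giving
\(|q' x - p'| = |a|\,|q_n x - p_n| + |b|\,|q_{n+1} x - p_{n+1}|,\)
which strictly exceeds $|q_n x - p_n|$ unless $b = 0$ and $|a| = 1$; together with $q' > 0$ this collapses to $(p', q') = (p_n, q_n)$. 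Hence each $p_n/q_n$ is a best approximation of the second kind.

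For the reverse direction, suppose $p/q$ in lowest terms is a best approximation of the second kind, and pick $n$ with $q_n \leq q < q_{n+1}$. If $q_n < q$, then applying the forward direction to $(p, q)$ gives $|q x - p| > |q_n x - p_n|$, directly contradicting the best-approximation property against the competitor $(p_n, q_n)$. If $q = q_n$ but $p \neq p_n$, standard size estimates finish the job: the inequality $|q_n x - p_n| < 1/q_{n+1}$ puts $p_n$ within distance $1/2$ of $q_n x$, so any other integer is farther than $1/2$, while the best-approximation property forces $|q_n x - p| < |q_{n-1} x - p_{n-1}| < 1/q_n \leq 1/2$. This is where the single exception can enter: when $q_n = 1$ or the estimates degenerate at the very first indices, the best integer approximation may be $a_0 + 1$ rather than the convergent $p_0/q_0 = a_0$.

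The main obstacle will be keeping these size estimates honest at the smallest indices and verifying that the exception really does occur at most once. Everything else reduces to sign bookkeeping around the matrix identity above, together with the classical monotonicity $|q_n x - p_n| < |q_{n-1} x - p_{n-1}|$ of convergent errors.
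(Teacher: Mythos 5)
The paper does not give its own proof of this statement; it is quoted verbatim as a classical result of Cassels (Theorem II of \cite{Cassels}; see also Khinchin \cite{Khinchin}, Chapter II, Theorems 16--17), so there is no internal argument to compare yours against. That said, what you have sketched is essentially the standard textbook proof: expand an arbitrary pair $(p',q')$ in the unimodular basis given by two consecutive convergents via $p'=ap_n+bp_{n+1}$, $q'=aq_n+bq_{n+1}$, observe that the constraint $0<q'<q_{n+1}$ together with positivity of the $q_i$ forces $a$ and $b$ to have opposite signs, and then use the classical alternation in sign of $q_nx-p_n$ so that the two contributions to $q'x-p'$ reinforce rather than cancel, giving $|q'x-p'|=|a|\,|q_nx-p_n|+|b|\,|q_{n+1}x-p_{n+1}|$. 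This is precisely the mechanism in Cassels's and Khinchin's proofs, so the route is correct and not a genuinely different one.

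Where the sketch is still short of a complete proof is exactly where you flag it: at the smallest indices. In particular, (i) the forward argument as stated assumes $q_n<q_{n+1}$, which fails only when $n=0$ and $a_1=1$ (so $q_0=q_1=1$), and this is precisely the unique exceptional case --- $p_0/q_0=a_0$ fails to be a best approximation because $a_0+1=p_1/q_1$ has the same denominator and $|x-(a_0+1)|<|x-a_0|$ when $\{x\}>1/2$; (ii) in the converse direction, your size estimate $|q_{n-1}x-p_{n-1}|<1/q_n\le 1/2$ needs $q_n\ge 2$ and $n\ge 1$, so the cases $n\in\{0,1\}$ require a separate (though routine) check; and (iii) you assert but do not verify that the exception occurs at most once. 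None of these is a conceptual obstruction --- they are the bookkeeping items you already anticipated --- but a complete write-up would need to settle them explicitly.
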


Best approximations of the \emph{first} kind are either RCF convergents or their intermediate fractions (also know as mediants or semi-convergents). For more information and definitions, see Chapter II of \cite{Khinchin}.

\subsection{New results}

We shall state the new results in terms of a more general domain $K$ instead of just the Dirichlet domain $K_D$ given in the opening section (see next section for more definitions). We define the radius of $K$ by
\(
\rad(K) := \sup \{\Norm{h}: h  \in K\}.
\)
All domains we will be interested in satisfy $\rad{K}<1$. It was shown in \cite{LV} that  $\rad(K_D)=2^{-1/4}$.

We define distance between two points $h_1=(u_1,v_1),h_2=(u_2,v_2)\in \Sieg$ by 
\[\label{eq:distancedefined}
d(h_1,h_2) := \Norm{ h_1^{-1} * h_2}=\norm{\overline{v_1}-\overline{u_1}u_2+v_2}^{1/2}.
\]

We begin with an analog of \ref{eq:RCFapprox}.

\begin{thm}\label{thm:truediophantineexponent} Let $h \in \Sieg$ be a point with at least $n+1$ continued fraction digits, including $\gamma_0$. 
We have
\(
d\left( \left( \frac{r_n}{q_n}, \frac{p_n}{q_n} \right), h \right) \asymp \left| \frac{v_{n+1}}{q_n^2} \right|^{1/2} 
\)
with implicit constant \(
\mathcal{R}_K :=  \prod_{n=1}^\infty \left( 1+ \rad(K)^n \right)^2 ,
\)
 and in particular,
\[\label{eq:distanceconstant}
d\left( \left( \frac{r_n}{q_n}, \frac{p_n}{q_n} \right), h \right) \le \frac{\rad(K) \cdot \mathcal{R}_K}{|q_n|}.
\]
\end{thm}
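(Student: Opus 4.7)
My plan is to derive an exact closed-form expression for $d\!\left(\bigl(\tfrac{r_n}{q_n},\tfrac{p_n}{q_n}\bigr), h\right)$ in terms of $q_n$, $q_{n-1}$, and the tail $h_{n+1}$, and then to control the residual factor via a telescoping-style product that evaluates to $\mathcal{R}_K$.

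For the exact formula, I would work with the matrix representation of the relevant Heisenberg Möbius transformations: both left translation $L_\gamma : g \mapsto \gamma * g$ and the Koranyi inversion $\iota$ correspond to explicit matrices, so composition becomes matrix multiplication. Writing
\begin{align*}
M_n := L_{\gamma_0} \circ \iota \circ L_{\gamma_1} \circ \iota \circ \cdots \circ L_{\gamma_n},
\end{align*}
one checks by unwinding the recursion for the continued fraction algorithm that $M_n$ carries a base point (the origin, say) to $(r_n/q_n, p_n/q_n)$ and carries a tail element depending on $h_{n+1}$ (obtained by expanding $h_n = \iota(\gamma_{n+1}*h_{n+1})$) to the original point $h$. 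Moreover, the columns of the matrix of $M_n$ record $(r_n,p_n,q_n)$ together with the data of the previous convergent. Since $M_n$ is a conformal Heisenberg Möbius transformation, its action on Koranyi distances is computable from the matrix entries, and applying this to the two points above yields an exact identity of the form
\begin{align*}
d\!\left(\bigl(\tfrac{r_n}{q_n},\tfrac{p_n}{q_n}\bigr),\,h\right)^2 \;=\; \frac{|v_{n+1}|}{|q_n|\cdot |Q_n|},
\end{align*}
where $Q_n$ is a $\Z[\ii]$-linear combination of $q_n$ and $q_{n-1}$ whose coefficients are determined by the coordinates of $h_{n+1}$ and which reduces to $q_n$ when $h_{n+1}=(0,0)$. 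This is the Heisenberg analog of the classical identity $x - p_n/q_n = (-1)^n/(q_n(\alpha_{n+1}q_n + q_{n-1}))$; I expect the matrix computation to be essentially the content of Proposition \ref{prop:distance}.

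To upgrade this identity to the asymptotic $\asymp$ statement, I would then show that $|Q_n| \asymp |q_n|$ with implicit constants bounded by $\mathcal{R}_K^2$ (the square accounts for the square root on $d$). The plan here is to bound the ratios $|q_{n-i}/q_n|$ by $\rad(K)^i$ (which should follow from the growth of the denominators combined with $\Norm{h_j}\le \rad(K)$), so that each nonzero tail at level $i$ introduces into $q_n$ a relative correction of size at most $\rad(K)^i$. Collecting the resulting multiplicative factors of $1+\rad(K)^i$ across all levels and squaring produces the product $\mathcal{R}_K = \prod_{i\ge 1}(1+\rad(K)^i)^2$, which converges because $\rad(K)<1$. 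The explicit bound in \eqref{eq:distanceconstant} then follows immediately from $\Norm{h_{n+1}} = |v_{n+1}|^{1/2} \le \rad(K)$.

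I anticipate that the main obstacle will be this second step, namely verifying that the relative corrections to $q_n$ really decay like $\rad(K)^i$ at level $i$ rather than being uniformly bounded, which would yield a divergent product. This will require careful use of the Heisenberg structure, in particular the geometric growth of the denominators $|q_n|$ forced by the condition $\rad(K)<1$ on the domain of the Gauss map.
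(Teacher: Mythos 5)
Your first step is on target and is essentially Proposition \ref{prop:distance} combined with Lemma \ref{lemma:fracq}, though the exact identity reads $d^2 = |v_n|/\bigl(|q_n|\cdot|q_n + \frak{q}_n u_n - q_{n-1}v_n|\bigr)$: the denominator involves the auxiliary entry $\frak{q}_n$ from the middle column of $Q_n$, not merely a $\Z[\ii]$-combination of $q_n$ and $q_{n-1}$, and the tail coordinates that enter are those of $h_n$ rather than $h_{n+1}$.

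The genuine gap is in the second step, which is where all the work in the paper lives (it is the separately-proved Theorem \ref{thm:relativesize}). The quantity to be compared with $|q_n|$ is $|q_n + \frak{q}_n u_n - q_{n-1}v_n|$, which by Lemma \ref{lemma:fracq} equals $1/|v_0 v_1 \cdots v_{n-1}|$. Your proposed mechanism --- bound $|q_{n-i}/q_n|\lesssim\rad(K)^i$ and collect a correction at every level $i$ --- does not match this expression: it involves only the two consecutive denominators $q_n$ and $q_{n-1}$, not all of $q_0,\dots,q_n$, so there is no visible source of a product over all $i$. A direct term-by-term bound on $|1 + (\frak{q}_n/q_n)u_n - (q_{n-1}/q_n)v_n|$ would also need control of $|\frak{q}_n/q_n|$ (which does not follow from the denominator growth you cite, since $\frak{q}_n$ is pinned down only by unitarity), and for $\rad(K_D)=2^{-1/4}$ the resulting constants are not a priori finite. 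The paper's argument is structurally different: it derives a second closed form $q_n = (-1)^n/(v'_0 v'_1 \cdots v'_{n-1})$, where $v'_i = p_n^{(i)}/q_n^{(i)}$ comes from $(q_n^{(i)}:r_n^{(i)}:p_n^{(i)}) := A_{\gamma_{i+1}}\cdots A_{\gamma_n}(1:0:0)$, so that $|q_n|\cdot|v_0\cdots v_{n-1}| = \prod_{i=0}^{n-1}|v_i/v'_i|$ exactly. Each factor is then shown to lie within $(1+\rad(K)^{n-i})^2$ of $1$ by observing that $(q_n^{(i)}:r_n^{(i)}:p_n^{(i)})$ is the $(n-i)$th convergent to $T^i h$ and applying Proposition \ref{prop:distance} once more to that shifted point. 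That is the actual source of the product $\mathcal{R}_K=\prod_{j\ge1}(1+\rad(K)^j)^2$. Your intuition that each level contributes a factor $1+\rad(K)^j$ is the right target, but the path through ratios $q_{n-i}/q_n$ does not reach it; the key missing idea is the second closed form for $q_n$ in terms of the shifted convergents.
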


By saying ``$A \asymp B$ with implicit constant $C$,'' we mean that $C^{-1} A \le B \le C A$. The implicit constant given in Theorem \ref{thm:truediophantineexponent} appears to be very far from the truth. For the Dirichlet region, the constant $\rad(K_D) \mathcal{R}_{K_D}$ is approximately $ 5656.5$, but in Mathematica testing, we never saw a constant larger than $1.26$.

While we may not know yet what the best possible constant multiplier is in \eqref{eq:distanceconstant}, we can prove the following Khinchin-type result to show the exponent in \eqref{eq:distanceconstant} is best possible.

\begin{thm}\label{thm:diophantineexponent}
Let $\epsilon >0$ and $C>0$; and let $A\subset \Sieg$ denote the set of $h\in \Sieg$ such that
\(
 d\left( \left( \frac{r}{q}, \frac{p}{q} \right), h \right) \le \frac{C}{|q|^{1+\epsilon}} \text{ for infinitely many points }  \left( \frac{r}{q}, \frac{p}{q} \right) \in \mathcal{S}(\Q) ,
\)
where we assume that all rational points are in lowest terms.  
Then the set $A$ has measure $0$ with respect to the inherited measure on $\Sieg$ (see Section \ref{section:background}).
\end{thm}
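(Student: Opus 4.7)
The plan is a Borel--Cantelli argument adapted to the Heisenberg setting. Since both the distance $d$ and the inherited measure are left-invariant under $\Sieg(\Z)$, which acts cocompactly on $\Sieg$, it suffices to prove $\mu(A \cap K_0) = 0$ for a single bounded region $K_0$. I would fix such a $K_0$, together with a slightly larger bounded region $K_0' \supseteq K_0$ absorbing every $d$-ball of radius at most $C$ centred at a rational point meeting $K_0$; then for each integer $N \geq 1$ set
\[
E_N := \bigcup_{\substack{(r/q,p/q) \in \Sieg(\Q) \cap K_0' \\ \gcd(r,p,q)=1,\ |q| \in [2^N,2^{N+1})}} B\!\left((r/q,p/q),\, \frac{C}{|q|^{1+\epsilon}}\right).
\]
Any $h \in A \cap K_0$ lies in $E_N$ for infinitely many $N$ (because the number of admissible approximations with denominator norm in any fixed dyadic window is finite), so $A \cap K_0 \subseteq \limsup_N E_N$ and the result reduces to $\sum_N \mu(E_N) < \infty$.

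Two estimates feed into the bound on $\mu(E_N)$. First, writing $u = x+\ii y$ and $v = (x^2+y^2)/2+\ii t$ and using the scaling $(x,y,t)\mapsto(rx,ry,r^2 t)$, one obtains $\mu(B(h_0,r)) = c\,r^4$ for an absolute constant $c$; left-invariance of $d$ and of $\mu$ makes $c$ independent of the centre, and the exponent $4$ reflects the homogeneous dimension of $\Sieg$. Second, I need the rational-point count
\[
\#\bigl\{(r/q,p/q) \in \Sieg(\Q) \cap K_0' \st \gcd(r,p,q)=1,\ |q| \in [2^N,2^{N+1})\bigr\} \leq C_0\,2^{4N}
\]
for some absolute $C_0$. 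Combining the two gives $\mu(E_N) \leq c\,C^4\,C_0\cdot 2^{-4N\epsilon}$, and the geometric series $\sum_N 2^{-4N\epsilon}$ converges for every $\epsilon > 0$. Translating the conclusion $\mu(A \cap K_0) = 0$ across countably many translates of $K_0$ by $\Sieg(\Z)$ then yields $\mu(A) = 0$.

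The main obstacle, and the only step where the arithmetic of $\Sieg(\Z)$ genuinely intervenes, is the rational-point count. My approach is to reduce it to a count of Siegel-integer triples $(r,p,q) \in \Z[\ii]^3$ with $|r|^2 = 2\,\Re(p\bar q)$, $(r/q, p/q) \in K_0'$, and $|q| \leq Q$. For each fixed $q$, boundedness of $u = r/q$ and $v = p/q$ forces $|r|,|p| = O(|q|)$, producing $O(|q|^4)$ unrestricted pairs; the single real linear condition $\Re(p\bar q)=|r|^2/2$ then cuts this by an average factor equal to the range of $\Re(p\bar q)$, namely $O(|q|^2)$, leaving $O(|q|^2)$ admissible pairs per $q$. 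Summing over Gaussian integers of norm at most $Q$ gives $O(Q^4)$ such triples, and since every primitive (lowest-terms) rational point with $|q_0| \leq Q$ corresponds to at least one such triple (take $d$ a unit), the bound transfers directly to the primitive count and yields the required $O(2^{4N})$ at scale $|q| \sim 2^N$. Once this count is in hand the Borel--Cantelli step is immediate, and no ergodic-theoretic input about the Gauss map $T$ is needed.
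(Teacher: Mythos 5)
Your overall strategy is the same as the paper's: a Borel--Cantelli argument in which one reduces to a fundamental domain, uses that the Haar ball volume scales as $r^4$ (homogeneous dimension $4$), and multiplies by a count of rational centres. The paper organises the sum per level $m = |q|^2$ rather than per dyadic window $|q|\in[2^N,2^{N+1})$; that difference is cosmetic.

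The gap is in the rational-point count, which you correctly flag as ``the main obstacle, and the only step where the arithmetic of $\Sieg(\Z)$ genuinely intervenes'' --- but then you resolve it only heuristically. The claim that the linear constraint $\Re(p\bar q)=|r|^2/2$ ``cuts this by an average factor equal to the range of $\Re(p\bar q)$, namely $O(|q|^2)$'' is not a proof, and in fact it is false uniformly in $q$. Write $q=a+b\ii$ and $g=\gcd(a,b)$. For fixed $r$ the equation $ac+bd=|r|^2/2$ in $(c,d)=\Re p,\Im p$ either has no solutions (when $g\nmid|r|^2/2$) or its solutions form a coset of a rank-one lattice of covolume $|q|/g$, giving $\Theta(g)$ admissible $p$ in a disc of radius $O(|q|)$; so the naive per-$q$ count is $O(g\cdot|q|^2)$, not $O(|q|^2)$, and the worst case $q$ purely real or imaginary (where $g=|q|$, the constraint becomes one-dimensional, and one gets $O(|q|^3)$ pairs per $q$ without further divisibility input) breaks your uniform bound entirely. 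Whether the total over all $q$ with $|q|\le Q$ comes out as $O(Q^4)$ after incorporating the divisibility restrictions on $r$ that compensate for the factor $g$ is exactly the content of the paper's case analysis, which yields a bound of the form $O\bigl(m\cdot\sum_{g^2\mid m}g\,r_2(m/g^2)\bigr)$ per level $m$ plus a separate $O(m^{3/2+o(1)})$ term for the purely real/imaginary $q$'s, and then verifies summability of the resulting double sum against $\phi(m)^4$. Your sketch skips all of this, so the crucial inequality $\#\{\cdot\}\le C_0\,2^{4N}$ is asserted rather than proved; to close the gap you would need to carry out something like the paper's $\gcd$ stratification (or an equivalent lattice-point argument with the divisibility conditions made explicit) and should also allow a harmless subpolynomial loss, which still leaves $\sum_N \mu(E_N)$ convergent for every $\epsilon>0$.
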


As in Khinchin's result, the result holds for a more general function than $C|q|^{-1-\epsilon}$. We shall remark further about this in the proof.

Theorem \ref{thm:diophantineexponent} is similar to a result of Hersonsky and Paulin \cite{HersonskyPaulin}. We shall briefly discuss the relation between the two results in Section \ref{section:HersonskyPaulin}.

At this point, we are unable to prove an analog of Legendre's Theorem, but we can show that the convergents satisfy a notion of best approximation.

\begin{thm}\label{thm:bestapproximant}
Let $h \in \Sieg\setminus \Sieg(\Q)$ be an irrational point with $n$th convergent $\left(\frac{r_n}{q_n}, \frac{p_n}{q_n}  \right)$.  Let $\left( \frac{R}{Q}, \frac{P}{Q} \right)$ be a different rational point in $\mathcal{S}$ in lowest terms.

If 
\(
|Q| < \frac{1}{2\rad(K)^2 \mathcal{R}_K^2} |q_n|,
\)
 then 
\(
d\left( \left(\frac{R}{Q},\frac{P}{Q}\right), h \right) > d \left(\left( \frac{r_n}{q_n}, \frac{p_n}{q_n}\right), h \right).
\)
\end{thm}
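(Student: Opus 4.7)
The plan is to argue by contradiction, pitting an \emph{arithmetic} lower bound on the distance between two distinct rational points of $\Sieg(\Q)$ against an \emph{analytic} upper bound coming from \eqref{eq:distanceconstant}. Suppose, toward a contradiction, that $\left(\tfrac{R}{Q}, \tfrac{P}{Q}\right)$ is a rational point distinct from $\left(\tfrac{r_n}{q_n}, \tfrac{p_n}{q_n}\right)$ satisfying the hypothesis on $|Q|$ together with $d\!\left(\left(\tfrac{R}{Q}, \tfrac{P}{Q}\right), h\right) \le d\!\left(\left(\tfrac{r_n}{q_n}, \tfrac{p_n}{q_n}\right), h\right)$.

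Evaluating \eqref{eq:distancedefined} directly at the two rational points gives
\(
d\!\left(\left(\tfrac{r_n}{q_n}, \tfrac{p_n}{q_n}\right), \left(\tfrac{R}{Q}, \tfrac{P}{Q}\right)\right)^{2} \;=\; \frac{\bigl|\,\overline{p_n}\,Q - \overline{r_n}\,R + P\,\overline{q_n}\,\bigr|}{|q_n|\,|Q|}.
\)
The numerator is a Gaussian integer, because $r_n,p_n,q_n,R,P,Q\in\Z[\ii]$, and it is nonzero: a vanishing numerator would force the Koranyi distance to be zero, which by the Heisenberg constraint $|u|^{2}=2\Re(v)$ would make the two points coincide. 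Hence its absolute value is at least $1$, yielding the arithmetic lower bound $d\bigl((r_n/q_n,p_n/q_n),(R/Q,P/Q)\bigr) \ge 1/\sqrt{|q_n|\,|Q|}$.

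For the opposing upper bound, the Koranyi metric satisfies the triangle inequality, so the assumption on the two distances to $h$ together with \eqref{eq:distanceconstant} gives
\(
d\!\left(\left(\tfrac{r_n}{q_n}, \tfrac{p_n}{q_n}\right), \left(\tfrac{R}{Q}, \tfrac{P}{Q}\right)\right) \;\le\; 2\,d\!\left(\left(\tfrac{r_n}{q_n}, \tfrac{p_n}{q_n}\right), h\right) \;\le\; \frac{2\,\rad(K)\,\mathcal{R}_K}{|q_n|}.
\)
Squaring and combining the two bounds forces $|Q|$ to be bounded below by a constant multiple of $|q_n|/\bigl(\rad(K)^{2}\mathcal{R}_K^{2}\bigr)$, which will contradict the hypothesis. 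The main obstacle I anticipate is extracting the precise constant $\tfrac{1}{2}$ stated in the theorem: a crude triangle-inequality calculation produces only $\tfrac{1}{4}$, and closing the remaining factor-of-two gap will likely require either sharpening the integrality bound on the numerator by exploiting the Heisenberg identities $|r_n|^{2} = 2\Re(p_n\overline{q_n})$ and $|R|^{2} = 2\Re(P\overline{Q})$, or replacing the crude triangle step with a more careful comparison that uses the asymptotic $d \asymp |v_{n+1}/q_n^{2}|^{1/2}$ from Theorem \ref{thm:truediophantineexponent}.
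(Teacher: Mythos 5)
Your argument is a Legendre-style approach that is genuinely different from the paper's, and its outline is sound: for two distinct rational points, the quantity $\overline{p_n}Q-\overline{r_n}R+P\overline{q_n}$ is indeed a nonzero Gaussian integer (your justification via the Siegel constraint $|u|^2=2\Re(v)$ correctly shows that a vanishing numerator forces the points to coincide), which gives the arithmetic lower bound $d\ge 1/\sqrt{|q_n|\,|Q|}$; and the Cygan distance $d(h_1,h_2)=\Norm{h_1^{-1}*h_2}$ does satisfy the triangle inequality (the paper itself invokes it in the proof of Proposition~\ref{prop:bestapproximant}), so the combination with \eqref{eq:distanceconstant} is legitimate. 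However, as you correctly flag, this produces the hypothesis constant $\tfrac{1}{4\rad(K)^2\mathcal{R}_K^2}$ rather than the stated $\tfrac{1}{2\rad(K)^2\mathcal{R}_K^2}$, so your argument proves a strictly weaker theorem. The paper's route avoids the lossy two-leg triangle inequality entirely: it writes $(Q:R:P)=Q_{n+1}(a:b:c)$ for an integer vector $(a:b:c)$, derives from \eqref{eq:prq} and \eqref{eq:tildeprq} the identities \eqref{eq:overlineQ} and \eqref{eq:fracQ} expressing $|Q|/|q_n|$ and $\left|\overline{P}-\overline{R}u+\overline{Q}v\right|/\left|\overline{p_n}-\overline{r_n}u+\overline{q_n}v\right|$ as $|a|$ times Cygan distances from the common point $(-b/a,c/a)$, and then applies a \emph{single} triangle inequality among three points to bound $\sqrt{x_1}+\sqrt{x_2}$ from below (Proposition~\ref{prop:bestapproximant}). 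That is what recovers the better constant. Your proposed fixes are unlikely to close the gap: the factor-of-two loss in your version is structural (it comes from bounding $d((r_n/q_n,p_n/q_n),(R/Q,P/Q))$ by the sum of two distances to $h$, each controlled only by \eqref{eq:distanceconstant}), not from slack in the integrality bound or in the $\asymp$ refinement from Theorem~\ref{thm:truediophantineexponent}, so sharpening either of those will not recover the extra factor.
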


We shall show a more precise statement than Theorem \ref{thm:bestapproximant} in Proposition \ref{prop:bestapproximant}. 

While the method of proof of Theorem \ref{thm:diophantineexponent} is similar to classical methods, we are not aware of a proof that resembles the one we use for Theorem \ref{thm:bestapproximant}.

\section{Background}\label{section:background}

We require additional facts and definitions about the continued fraction algorithm on the Heisenberg group. Many of the calculations that are skimmed over here are included in \cite{LV}.

To begin with, many readers may have seen the Heisenberg group in the form $(z,t)\in \Heis=\C\times\R$ with group action \( (z,t)*(z',t') = (z+z',t+t'+2\Im(z\overline{z'})). \) This can be translated to the Siegel model $\Sieg$ by \( (z,t) \in \Heis \mapsto (z(1+\ii),\norm{z}^2+t \ii) \in \Sieg.\) 

Note that Lebesgue measure on $\Heis$ (seen as $\R^3$) is left invariant under left-translation by elements of $g$. Therefore we will consider the default measure on $\Sieg$ to be the measure inherited from Lebesgue measure on $\Heis$. The volume of a ball of radius $r$ is $C r^4$, where $C$ equals the Lebesgue measure of the ball of radius $1$. We will denote the volume of a ball of radius $r$ by $\operatorname{Vol}(B_r)$.

We will use $h=(u,v)$ to denote a point in $\Sieg$, often, but not always, irrational. We will use $\gamma$ to denote an integer point in $\Sieg(\Z)$.

We are also interested in identifying $\Sieg$ as we defined it in the introduction---called the \emph{planar Siegel model}---with a subset of $\C^3$, considered projectively. In particular, we will consider the \emph{projective Siegel model} of $\Sieg$ as being
\[\label{eq:projectivemodel}
\left\{(z_1:z_2:z_3) \in \C^3\setminus \{(0:0:0\} : |z_2|^2 - 2 \Re(\overline{z_1}z_3)=0 \right\},
\]
where we consider two points in this space as being the same if one is a non-zero multiple of the other. We then identify the planar and projective models in the following way: $(u,v) \leftrightarrow (1:u:v)$. The projective model contains a point at infinity $(0:0:1)$ which is not in the planar model, but we will ignore this as it will not be important in the sequel. We also identify projective points $(1:u:v)$ with vertically written vectors:
\(
\left( \begin{array}{c}1\\u\\v \end{array} \right).
\)

We will freely swap back and forth between the two models for the remainder of the paper. One advantage given by the projective model is that it allows us to consider rational points in the planar model $(r/q,p/q)$ as ``integer'' points $(q:r:p) \in \Z[\ii]^3$ in the projective model.

The second, and most significant, advantage of the projective model is that it allows us to more simply model the action of linear fractional transformations on $\Sieg$.

Consider the group $U(2,1;\mathbb{Z}[\ii])$ of matrices given by
\(
\left\{ M\in GL(3,\mathbb{Z}[\ii]) \mid J M^{\dag}J = M^{-1}  \right\}
\)
where $M^{\dag}$ denotes the complex transpose of $M$ and 
\(
J = \left( \begin{array}{ccc}
0 & 0 & -1\\
0 & 1 & 0\\
-1 & 0 & 0
\end{array}  \right).
\)
We will allow these matrices to act on points in $\Sieg$ in the following way: given a point $h=(u,v)\in \Sieg$ and a matrix $M\in U(2,1;\Z[\ii])$, we let $Mh$ denote the point whose projective coordinates are given by
\(
M\left( \begin{array}{c}1\\u\\v \end{array} \right) .
\)

It can be shown that $J h = \iota h$, as such we may appropriately call $J$ the inversion matrix. Similarly, given a point $h=(u,v)$, let $T_h$ denote the matrix
\(
\left( \begin{array}{ccc} 1 & 0 & 0 \\ u & 1 & 0 \\ v & \overline{u} & 1 \end{array}\right);
\)
then the point $T_h h'$ is equivalent to $h* h'$. Therefore, we think of $T_h$ as a translation matrix. This also implies that $T_h^{-1}=T_{h^{-1}}$.

As we mentioned in the introduction, we want to consider more general sets $K$, beyond the Dirichlet domain $K_D$. We want to consider sets $K$ that satisfy the following properties:
\begin{enumerate}
\item $K$ is a fundamental domain for $\Sieg$ under the action of left-translation by $\Sieg(\Z)$; i.e., $\bigcup_{\gamma \in \Sieg(\Z)} \gamma* K = \Sieg$, and $\gamma* K \cap K =\emptyset$ for all $\gamma \in \Sieg(\Z)\setminus \{(0,0)\}$.
\item $\rad(K)<1$.
\item The boundary of $K$ is piece-wise smooth.
\end{enumerate}
The last condition is perhaps not necessary for the results of this paper, but avoids some $K$ that are not nice to work with. We will assume any $K$ considered throughout this paper satisfies all these conditions.

Given $K$, we define the nearest integer function with respect to $K$ by $[h]_K=\gamma$ if $h \in \gamma * K$ for $\gamma \in \Sieg(\Z)$. By definition of $K$, the integer point $\gamma$ is unique. When $K$ is known, we will often just write $[h]$ instead of $[h]_K$. 

With these definitions, we may define the forward iterates, the continued fraction digits, the convergents, and the Gauss map with respect to an arbitrary $K$ in the same way that we gave them for $K_D$ in Section \ref{sec:firstintro}. We will often write out convergents in projective form as $(q_n:r_n:p_n)$. In the same way that we would write the coordinates of $h$ as $(u,v)$, we write out the coordinates of $h_i$ as $(u_i,v_i)$.

With these definitions, we can write out the action of the Gauss map $T$ using the matrices we have seen before. For $h  \in K \setminus\{(0,0)\}$, we have
\(
h_1 = Th = T_{\gamma_1^{-1}} J h = T_{\gamma_1}^{-1} J h.
\)
Rewriting and then extending this, we obtain
\(
h =J T_{\gamma_1} h_1 = JT_{\gamma_1} JT_{\gamma_2} h_2 = \dots = JT_{\gamma_1} \dots JT_{\gamma_n} h_n.
\)
We will shorthand the matrix $JT_\gamma$ by $A_\gamma$ and $JT_{\gamma_1} \dots JT_{\gamma_n}$ by $Q_n$. Note that $A_{\gamma_n}h_n = h_{n-1}$ and $Q_n h_n = h$. The matrix $Q_n$ is closely related to the convergents $(q_n:r_n:p_n)$ in that we can write the matrix as
\(
Q_n = \left( \begin{array}{ccc}
q_n & \frak{q}_n & -q_{n-1}\\
r_n & \frak{r}_n & -r_{n-1}\\
p_n & \frak{p}_n & -p_{n-1}
\end{array}  \right) .
\)
The column $(\frak{q}_n:\frak{r}_n:\frak{p}_n)$ is completely determined by the other two columns and the matrix relation that defines $U(2,1;\Z[\ii])$.

We note that the distance between two points is unchanged if both points are left-translated by the same $\gamma$, i.e.
\[\label{eq:translated}
 \operatorname{d}(h,h') = \operatorname{d}(\gamma * h, \gamma * h').
\]
 Moreover, we have
\[\label{eq:inversed}
\operatorname{d}(h,h') = \Norm{h} \Norm{h'} \operatorname{d}(\iota h, \iota h').
\]

\begin{prop}[Lemma 3.19 and Theorem 3.23 in \cite{LV}]\label{prop:distance}
We have
\(
\operatorname{d}\left( \left( \frac{r_n}{q_n}, \frac{p_n}{q_n} \right), h \right) &=\left| \frac{\prod_{i=0}^n v_i}{q_n} \right|^{1/2}\\
&=  \left| \frac{1}{\overline{q_n}(q_{n+1}+\frak{q}_{n+1}u_{n+1}-q_nv_{n+1})} \right|^{1/2}.
\)
\end{prop}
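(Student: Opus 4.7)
The plan is to prove the first equality by induction on $n$ using the recursive structure of the algorithm, and then to derive the second equality from the matrix formalism. For the base case $n=0$, translation-invariance \eqref{eq:translated} gives $d(\gamma_0, h) = d((0,0), h_0) = \Norm{h_0} = |v_0|^{1/2}$, matching the formula since $q_0=1$. For the inductive step, both $h$ and its $n$th convergent share a common head: letting $h' := \iota h_0$, one has $h = \gamma_0 * \iota h'$, and the convergent factors as $\gamma_0 * \iota c'_{n-1}$, where $c'_{n-1}$ denotes the $(n-1)$th convergent of the continued fraction for $h'$ (whose digits are $\gamma_1, \gamma_2, \ldots$). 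Applying \eqref{eq:translated} followed by \eqref{eq:inversed} reduces the computation to
\(
d\left( \left(\tfrac{r_n}{q_n}, \tfrac{p_n}{q_n}\right), h \right) = \frac{d(c'_{n-1}, h')}{\Norm{h'} \cdot \Norm{c'_{n-1}}}.
\)
Since $\Norm{h'} = |v_0|^{-1/2}$, $\Norm{c'_{n-1}} = |p'_{n-1}/q'_{n-1}|^{1/2}$, and the iterates of $h'$ satisfy $h'_i = h_{i+1}$, the inductive hypothesis rewrites the right-hand side as $|\prod_{i=0}^n v_i / p'_{n-1}|^{1/2}$. The step closes using the identity $|q_n| = |p'_{n-1}|$, which falls out of the recursion on convergent column vectors: $(q_n, r_n, p_n)^T$ is the image of $(q'_{n-1}, r'_{n-1}, p'_{n-1})^T$ under $T_{\gamma_0} J$, which sends the first coordinate to $-p'_{n-1}$.

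For the second equality, I return to the projective formalism. From $Q_{n+1} h_{n+1} = h$, the planar coordinates of $h$ are obtained by dividing the projective vector by $\lambda_{n+1} := q_{n+1} + \frak{q}_{n+1} u_{n+1} - q_n v_{n+1}$. Substituting into \eqref{eq:distancedefined} and clearing denominators, the quantity $\overline{q_n}\, \lambda_{n+1} \bigl( \overline{p_n/q_n} - \overline{r_n/q_n}\, u + v \bigr)$ becomes a $\Z[\ii]$-linear combination of $1, u_{n+1}, v_{n+1}$ whose coefficients are precisely the entries of $Q_n^\dag J Q_{n+1}$. Using $Q_n^\dag J Q_n = J$ (from $Q_n \in U(2,1;\Z[\ii])$) together with $Q_{n+1} = Q_n A_{\gamma_{n+1}}$, this matrix reduces to $J A_{\gamma_{n+1}}$, whose entries are read off directly from $J$ and $T_{\gamma_{n+1}}$; two of the three relevant coefficients vanish and the last is a unit, leaving $\pm 1$ in the numerator.

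The main obstacle is bookkeeping: correctly tracking the index shift, the signs introduced by $J$, and verifying the identity $|p'_{n-1}| = |q_n|$ that links the shifted and original convergents. None of the individual computations is deep---the ideas are standard continued-fraction manipulations adapted to the projective/unitary framework---but two distinct formulas and several conventions (projective versus planar, the role of $\gamma_0$, and the precise definition of $Q_n$) must be kept consistent throughout.
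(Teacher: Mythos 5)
The paper does not prove Proposition~\ref{prop:distance}; it is quoted verbatim as Lemma~3.19 and Theorem~3.23 of \cite{LV}, so there is no in-paper argument to compare against. Your proposal is therefore an independent reconstruction, and the outline checks out. For the first equality: the base case, the reduction via \eqref{eq:translated} and \eqref{eq:inversed}, the shift $h'_i = h_{i+1}$ (hence $v'_i = v_{i+1}$), and the cancellation of the $q'_{n-1}$ factors are all correct. The bookkeeping identity $\lvert q_n\rvert = \lvert p'_{n-1}\rvert$ is justified, but it is worth being explicit about why it holds \emph{exactly} and not merely projectively: $T_{\gamma_0}J$ lies in $U(2,1;\Z[\ii])\subset GL(3,\Z[\ii])$, hence it and its inverse preserve $\gcd$, so passing between the lowest-terms representatives introduces only a unit. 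Be aware, though, that the paper already records equation~\eqref{eq:prq}, namely $\overline{p_n}-\overline{r_n}u+\overline{q_n}v=(-1)^n\prod_{i=0}^n v_i$ (also cited from \cite{LV}); dividing by $\overline{q_n}$ and taking absolute values gives the first equality immediately, so the induction, while correct, is more work than the framework makes necessary. For the second equality, your computation $Q_n^\dag J Q_{n+1} = (Q_n^\dag J Q_n)A_{\gamma_{n+1}} = J A_{\gamma_{n+1}} = T_{\gamma_{n+1}}$ (using $J^2 = I$) is right, and since only the first row $(1,0,0)$ of $T_{\gamma_{n+1}}$ enters, the numerator reduces to a unit; this is, in effect, a re-derivation of Lemma~\ref{lemma:fracq}, and indeed the paper itself obtains the second equality from the first by exactly that lemma in the proof of Theorem~\ref{thm:truediophantineexponent}. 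One minor caveat to flag when writing this up: the identities $Q_{n+1}h_{n+1}=h$ and the column description of $Q_n$ are stated in the paper under the implicit normalization $h\in K$ (so $\gamma_0=(0,0)$); for general $h$ one should first reduce to $h_0$ via translation-invariance, noting that the first projective coordinate is unaffected by $T_{\gamma_0}$.
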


We make the relationship between the last two equalities more explicit via the following results, one of which we shall prove below.

\begin{lemma}[Lemma 3.19 in \cite{LV}]\label{lemma:fracq}
We have
\(
q_n+\frak{q}_n u_n - q_{n-1} v_n=\frac{(-1)^n}{vv_1 v_2 \dots v_{n-1}}. 
\)
\end{lemma}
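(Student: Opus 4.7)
The plan is to interpret the left-hand side as the projective scaling factor in the identity $Q_n h_n = h$, and then evaluate it by telescoping the single-step rescalings produced by the relation $A_{\gamma_n} h_n = h_{n-1}$. Using the explicit form of $Q_n$ recorded in Section \ref{section:background}, the first coordinate of $Q_n(1,u_n,v_n)^T$ is exactly $q_n + \mathfrak{q}_n u_n - q_{n-1} v_n$. On the other hand, since $Q_n h_n = h$ holds in projective coordinates and $h$ has affine lift $(1,u,v)^T$, there is a scalar $\lambda_n$ with $Q_n(1,u_n,v_n)^T = \lambda_n (1,u,v)^T$. Matching first coordinates identifies $\lambda_n$ with the left-hand side of the lemma, so it suffices to evaluate $\lambda_n$.

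To do so, I would factor $Q_n = Q_{n-1} A_{\gamma_n}$ and analyze how $A_\gamma = JT_\gamma$ acts on a single affine lift. The explicit $3 \times 3$ forms of $J$ and $T_\gamma$ given in the background show that the bottom row of every $A_\gamma$ is $(-1,0,0)$, so the third coordinate of $A_{\gamma_n}(1,u_n,v_n)^T$ is simply $-1$. Since $A_{\gamma_n} h_n = h_{n-1}$ projectively, there is a scalar $\mu_n$ with $A_{\gamma_n}(1,u_n,v_n)^T = \mu_n (1,u_{n-1},v_{n-1})^T$; comparing third coordinates yields $\mu_n = -1/v_{n-1}$.

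Applying this step-by-step gives $\lambda_n = \mu_n \lambda_{n-1}$, with base case $\lambda_0 = 1$ coming from $Q_0 = I$ and the convention $h = h_0$ (i.e.\ $\gamma_0 = (0,0)$, so $v = v_0$). Telescoping then produces
\[
\lambda_n = \prod_{k=1}^n \mu_k = \prod_{k=1}^n \frac{-1}{v_{k-1}} = \frac{(-1)^n}{v \, v_1 v_2 \cdots v_{n-1}},
\]
which is the claim. The main thing to be careful about is the projective-to-affine bookkeeping --- in particular, consistently reading off the rescaling factor and keeping track of the convention $v = v_0$ --- but once that is set up, the proof reduces to the clean observation that each $A_{\gamma_n}$ contributes a factor of $-1/v_{n-1}$, and no serious obstacle arises.
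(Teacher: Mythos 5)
Your proof is correct. Note that the paper does not actually prove Lemma \ref{lemma:fracq} --- it cites it as Lemma 3.19 in \cite{LV} --- so there is no in-paper argument to compare against directly. That said, your telescoping argument via $Q_n = Q_{n-1} A_{\gamma_n}$, using that the bottom row of each $A_\gamma = JT_\gamma$ is $(-1,0,0)$ to read off the projective rescaling factor $\mu_n = -1/v_{n-1}$, is exactly the mechanism the paper uses on $q_n$ in the proof of Theorem \ref{thm:relativesize}: there the observation $q_n^{(i)} = -p_n^{(i-1)}$, which is again the bottom row of $A_\gamma$ in disguise, produces the telescoping product $q_n = (-1)^n/(v_0' v_1' \cdots v_{n-1}')$. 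You apply the same step to the full affine lift $(1,u_n,v_n)^T$ rather than to $(1:0:0)$, which is the right variation to land on the quantity $q_n + \mathfrak{q}_n u_n - q_{n-1} v_n$. One remark: an alternative and slightly shorter route available within the paper's machinery is to invert $Q_n h_n = h$ using $Q_n^{-1} = JQ_n^{\dagger}J$, whose first row is $(-\overline{p_{n-1}}, \overline{r_{n-1}}, -\overline{q_{n-1}})$; comparing first coordinates of $Q_n^{-1}(1,u,v)^T$ with the lift $(1,u_n,v_n)^T$ gives $1/\lambda_n = -(\overline{p_{n-1}} - \overline{r_{n-1}} u + \overline{q_{n-1}} v)$, and then \eqref{eq:prq} at index $n-1$ finishes the job. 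Both arguments are sound and exploit the same unitary-group structure; yours has the advantage of not presupposing \eqref{eq:prq}.
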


\begin{thm}\label{thm:relativesize}
We have
\(
|q_n+\frak{q}_n u_n - q_{n-1} v_n| = |vv_1 v_2 \dots v_{n-1}|^{-1} \asymp q_n
\)
with implicit constant $\mathcal{R}_K$.
\end{thm}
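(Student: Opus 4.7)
The first equality is the content of Lemma~\ref{lemma:fracq}, so the remaining task is to establish $|v v_1 \cdots v_{n-1}|^{-1} \asymp |q_n|$ with implicit constant $\mathcal{R}_K$. Writing $E_n := q_n + \frak{q}_n u_n - q_{n-1} v_n$ (so that $|E_n| = |v v_1 \cdots v_{n-1}|^{-1}$), the goal becomes to show $|q_n/E_n| \in [\mathcal{R}_K^{-1}, \mathcal{R}_K]$.

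The plan is a joint induction on the normalized ratios $\alpha_n := q_n/E_n$ and $\beta_n := \frak{q}_n/E_n$. From the factorization $Q_n = Q_{n-1} J T_{\gamma_n}$, explicit matrix multiplication yields $q_n = q_{n-2} + \frak{q}_{n-1} u^* - q_{n-1} v^*$ and $\frak{q}_n = \frak{q}_{n-1} - q_{n-1}\overline{u^*}$, where $(u^*, v^*) = \gamma_n$. I will use the identity $\gamma_n * h_n = \iota h_{n-1}$ to express
\(
u^* = -u_{n-1}/v_{n-1} - u_n, \qquad v^* = 1/v_{n-1} + u_n \overline{u_{n-1}}/\overline{v_{n-1}} + \overline{v_n},
\)
substitute into the recurrence for $q_n$, multiply through by $v_{n-1}$, and eliminate $q_{n-2} v_{n-1}$ by means of the defining relation $E_{n-1} = q_{n-1} + \frak{q}_{n-1} u_{n-1} - q_{n-2} v_{n-1}$. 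Combined with $E_n = -E_{n-1}/v_{n-1}$ (which follows from Lemma~\ref{lemma:fracq}), the singular $1/v_{n-1}$ terms cancel, leaving the clean identity
\(
q_n - E_n = -\frak{q}_{n-1} u_n - q_{n-1}\bigl(u_n \overline{u_{n-1}}/\overline{v_{n-1}} + \overline{v_n}\bigr),
\)
with a parallel expression for $\frak{q}_n$ obtained from the recurrence $\frak{q}_n = \frak{q}_{n-1} - q_{n-1}\overline{u^*}$.

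Dividing by $E_n$, invoking $E_{n-1}/E_n = -v_{n-1}$ once more, and using the Heisenberg constraint $|u_k|^2 \le 2|v_k|$ to bound cross terms via $|u_{n-1} u_n| \le 2|v_{n-1}|^{1/2}|v_n|^{1/2}$, I expect to obtain coupled estimates of the form
\(
|\alpha_n - 1| \le \bigl((1 + \mu_{n-1} \mu_n)^2 - 1\bigr) |\alpha_{n-1}| + \sqrt{2}\, \mu_{n-1}^2 \mu_n |\beta_{n-1}|,
\)
(with $\mu_k := |v_k|^{1/2}$) together with a similar bound for $|\beta_n|$; the perfect square $(1+\mu_{n-1}\mu_n)^2$ appears precisely because $2\mu_{n-1}\mu_n + \mu_{n-1}^2 \mu_n^2 = (1+\mu_{n-1}\mu_n)^2 - 1$. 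Iterating the coupled system from the base values $\alpha_0 = 1$, $\beta_0 = 0$ and distributing the global constraint $\mu_0 \mu_1 \cdots \mu_{n-1} = |E_n|^{-1/2} \le \rad(K)^n$ across the inductive steps, I expect the bound on $|\alpha_n|$ to telescope into a partial product $\prod_{k=1}^n (1 + \rad(K)^k)^2 \le \mathcal{R}_K$. The matching lower bound $|\alpha_n| \ge \mathcal{R}_K^{-1}$ then follows by applying the reverse triangle inequality to the same identity.

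The main obstacle will be the careful bookkeeping needed to realize the $(1 + \rad(K)^k)^2$ factor at the $k$-th step. A naive application of the pointwise bound $\mu_{k-1}\mu_k \le \rad(K)^2$ at every step would produce the divergent product $\prod (1 + \rad(K)^2)^2$; the genuine $k$-dependent decay in $\mathcal{R}_K$ must come from exploiting the global product constraint on the $\mu_k$'s in a coordinated way across the iteration, rather than applying pointwise bounds at each step.
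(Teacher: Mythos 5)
Your opening observations are correct: Lemma~\ref{lemma:fracq} gives the first equality, and the identity $q_n/E_n = \alpha_n$ is the right quantity to control. Your recurrence bookkeeping is also sound as far as it goes; after eliminating $q_{n-2}$ one indeed obtains, with your notation,
\(
\alpha_n - 1 = -\alpha_{n-1}\bigl(1 - v^* v_{n-1}\bigr) + \beta_{n-1} u_n v_{n-1},
\)
and $|1-v^*v_{n-1}| \le |v_n||v_{n-1}| + |u_n||u_{n-1}| \le (1+\mu_{n-1}\mu_n)^2 - 1$, as you anticipated. But the obstacle you flag at the end is not mere bookkeeping --- it is fatal to the inductive strategy, for two distinct reasons.

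First, the per-step growth factor $(1 + \mu_{n-1}\mu_n)^2 - 1$ is bounded below only by the pointwise estimate $\mu_k \le \rad(K)$, and there is no stronger ``global product constraint'' to exploit: the statement $\mu_0\cdots\mu_{n-1} \le \rad(K)^n$ is precisely the pointwise bound multiplied out, and it cannot be redistributed to give $\mu_{k-1}\mu_k \le \rad(K)^k$ at individual steps. Iterating your recurrence therefore produces at best a bound of the form $C_n \le 1 + \delta\, C_{n-1} + \epsilon_n$ with $\delta = (1+\rad(K)^2)^2 - 1$. For the Dirichlet domain, $\rad(K_D)^2 = 2^{-1/2}$ gives $\delta \approx 1.91 > 1$, so the iteration does not even yield boundedness, let alone the constant $\mathcal{R}_K$. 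Second, even for small $\rad(K)$ where $\delta < 1$, the iteration would give a constant like $(1-\delta)^{-1}$, not the product $\prod_{k\ge 1}(1+\rad(K)^k)^2$. The $k$-dependent decay in $\mathcal{R}_K$ simply cannot arise from a forward recurrence in which the factor contributed at step $k$ depends only on $\mu_{k-1}, \mu_k$.

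The paper's proof avoids induction entirely. It introduces $q_n^{(i)} = $ the top entry of $A_{\gamma_{i+1}}\cdots A_{\gamma_n}(1:0:0)$ and $v_i' = p_n^{(i)}/q_n^{(i)}$, observes that $q_n = (-1)^n / (v_0' v_1'\cdots v_{n-1}')$, and hence writes
\(
\frac{q_n}{E_n} = \prod_{i=0}^{n-1} \frac{v_i}{v_i'}.
\)
The crucial point is that $(q_n^{(i)}:r_n^{(i)}:p_n^{(i)})$ is the $(n-i)$th convergent of $T^i h$, so $h_i' := (r_n^{(i)}/q_n^{(i)}, p_n^{(i)}/q_n^{(i)})$ agrees with $h_i$ in its first $n-i$ digits. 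By Proposition~\ref{prop:distance} this forces $d(h_i,h_i')/\Norm{h_i} \le \rad(K)^{n-i}$, whence $|v_i/v_i'|$ and $|v_i'/v_i|$ are both $\le (1+\rad(K)^{n-i})^2$. The exponent $n-i$ comes from the \emph{depth of shared continued-fraction expansion} between $h$ and its $n$th convergent --- a piece of global, $n$-dependent information that is invisible to a step-by-step recurrence. This is the missing idea, and without something playing the same role your induction cannot close.
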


For the Dirichlet Region, the implicit constant is $\approx 6726.7$. Mathematica calculations thus far have only given a lower bound of $\approx 0.35$ and an upper bound of $\approx 3.38$.

By applying Theorem \ref{thm:relativesize} together with Lemma \ref{lemma:fracq} for two successive indices, we obtain the following corollary.

\begin{cor}\label{cor:successivesize}
We have
\(
|q_{n-1}| \asymp |v_n \cdot q_n|
\)
with implicit constant $\mathcal{R}_K^2$.
\end{cor}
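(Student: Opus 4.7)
The plan is to apply Theorem \ref{thm:relativesize} at two consecutive indices and take the ratio. First I would write Theorem \ref{thm:relativesize} (using Lemma \ref{lemma:fracq} to make the left side of the comparison explicit as a product of $v_i$'s) at both index $n-1$ and index $n$, obtaining two comparisons of the form $|v v_1 \cdots v_{k-1}|^{-1} \asymp |q_k|$, each with implicit constant $\mathcal{R}_K$.

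Second, I would divide these two asymptotics. The common portion of the product on the left telescopes away, leaving a single $v_i$-factor, and the right-hand side collapses to $|q_n|/|q_{n-1}|$. Rearranging then gives the desired comparison $|q_{n-1}| \asymp |v_n q_n|$.

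The only thing to keep track of is the way implicit constants compound under division: if $A \asymp B$ with constant $C_1$ and $B' \asymp D$ with constant $C_2$, then $A/B' \asymp B/D$ with constant $C_1 C_2$, as follows directly from the defining inequalities $C_i^{-1} X \le Y \le C_i X$. Applying this with $C_1 = C_2 = \mathcal{R}_K$ yields the stated constant $\mathcal{R}_K^2$. I do not anticipate any substantive obstacle here, since all the real work has already been invested in Theorem \ref{thm:relativesize}; this corollary is essentially a one-line division.
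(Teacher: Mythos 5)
Your approach is exactly the one the paper has in mind: apply Theorem~\ref{thm:relativesize} (with Lemma~\ref{lemma:fracq} making the quantity on the left explicit as $|v_0 v_1 \cdots v_{k-1}|^{-1}$) at two consecutive indices, divide, and let the two implicit constants $\mathcal{R}_K$ compound to $\mathcal{R}_K^2$. That bookkeeping step is stated correctly.

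There is, however, one point you glossed over that is worth tracking carefully, because it actually exposes an index slip that appears in the paper itself. Writing Theorem~\ref{thm:relativesize} at indices $n-1$ and $n$ gives
\[
|v_0 v_1 \cdots v_{n-2}|^{-1} \asymp |q_{n-1}|, \qquad |v_0 v_1 \cdots v_{n-1}|^{-1} \asymp |q_n|,
\]
each with constant $\mathcal{R}_K$. The surviving factor after dividing is therefore $v_{n-1}$, not $v_n$: one obtains $|q_{n-1}| \asymp |v_{n-1}\, q_n|$ with constant $\mathcal{R}_K^2$. You wrote ``leaving a single $v_i$-factor'' without pinning down $i$, and then asserted the target statement; had you made the index explicit, you would have seen it is $i = n-1$. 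The same off-by-one appears in Theorem~\ref{thm:truediophantineexponent}: its proof literally produces $\left| v_n / q_n^2 \right|^{1/2}$, not $\left| v_{n+1}/q_n^2 \right|^{1/2}$. So your argument faithfully reproduces the paper's method, but the conclusion as stated (both yours and the corollary's) carries an index that the computation does not support; in your own writeup you should record the surviving factor explicitly rather than asserting it matches the target.
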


We also have the following result.

\begin{lemma}[Equation (3.7) in \cite{LV}]
We have
\[\label{eq:prq}
\overline{p_n}-\overline{r_n}u + \overline{q_n} v = (-1)^n \prod_{i=0}^n v_i.
\]
\end{lemma}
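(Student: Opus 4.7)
The plan is to recast the left-hand side of \eqref{eq:prq} as a bilinear pairing against the projective representative of $h$, and then exploit the $J$-unitarity of $Q_n$. Writing $Y=(1,u,v)^\top$ for the projective representative of $h$ and $X_n := Q_n e_1 = (q_n, r_n, p_n)^\top$ for the first column of $Q_n$, a direct expansion gives
\(
X_n^\dagger J Y \;=\; -\overline{q_n}\, v \,+\, \overline{r_n}\, u \,-\, \overline{p_n},
\)
so the LHS of \eqref{eq:prq} is exactly $-X_n^\dagger J Y$.

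Next, the identity $h = Q_n h_n$ in the projective model yields a scalar $\lambda\in\C^*$ with $Q_n Y_n = \lambda Y$, where $Y_n=(1,u_n,v_n)^\top$. Reading off the top coordinate of $Q_n Y_n$ and applying Lemma \ref{lemma:fracq} identifies
\(
\lambda \;=\; q_n + \frak{q}_n u_n - q_{n-1} v_n \;=\; \frac{(-1)^n}{v\, v_1 \cdots v_{n-1}},
\)
so that $-X_n^\dagger J Y = -\lambda^{-1}\, X_n^\dagger J Q_n Y_n$.

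Finally I invoke that $Q_n \in U(2,1;\Z[\ii])$. Because $J^2 = I$, the defining relation $JM^\dagger J = M^{-1}$ is equivalent to $M^\dagger J M = J$. Hence
\(
X_n^\dagger J Q_n \;=\; e_1^\top\, Q_n^\dagger J Q_n \;=\; e_1^\top J \;=\; (0,0,-1),
\)
which collapses the middle factor so that $X_n^\dagger J Q_n Y_n = -v_n$. Combining the previous two displays,
\(
-X_n^\dagger J Y \;=\; \frac{v_n}{\lambda} \;=\; (-1)^n\, v\, v_1 \cdots v_{n-1}\, v_n \;=\; (-1)^n \prod_{i=0}^n v_i,
\)
which is \eqref{eq:prq}. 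The only real obstacle is careful sign bookkeeping and tracking of the projective rescaling $\lambda$; an inductive proof via the matrix recursion $Q_n = Q_{n-1}\, J T_{\gamma_n}$ is also feasible but considerably more mechanical than this one-line application of $J$-unitarity.
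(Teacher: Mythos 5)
Your proof is correct and takes a genuinely different, cleaner route than the source would. The paper does not prove this identity itself---it is cited as Equation (3.7) of \cite{LV}---and the remark that the companion identity \eqref{eq:tildeprq} follows ``using an identical method'' strongly suggests that the argument in \cite{LV} is an induction on $n$ along the recursion $Q_n = Q_{n-1}A_{\gamma_n}$, tracking how the expression $\overline{p_n}-\overline{r_n}u+\overline{q_n}v$ transforms under one digit at a time. Your argument bypasses the induction by exploiting the structural fact that $Q_n \in U(2,1;\Z[\ii])$, so that $Q_n^\dagger J Q_n = J$ and the middle factor in $X_n^\dagger J Q_n Y_n$ collapses to $e_1^\top J Y_n = -v_n$, with the projective scalar $\lambda$ in $Q_n Y_n = \lambda Y$ read off the top coordinate. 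The payoff is conceptual: the sign $(-1)^n$ and the telescoping product $\prod v_i$ emerge together from $\lambda$ rather than accreting one step at a time. Two small caveats. First, you identified $\lambda$ via Lemma \ref{lemma:fracq}, which is Lemma 3.19 of \cite{LV} and almost certainly postdates Equation (3.7) there, so your derivation could not replace the original without risking circularity inside \cite{LV}; within the present paper this is harmless, and in any case you can make the proof self-contained by computing the rescaling directly from $Y_{i-1} = -v_{i-1}\,A_{\gamma_i}Y_i$, which yields $\lambda = (-1)^n/(v_0 v_1\cdots v_{n-1})$ with no outside input. Second, identifying the first column of $Q_n$ with $(q_n,r_n,p_n)$ and $Y$ with $(1,u,v)^\top$ tacitly assumes $\gamma_0=(0,0)$, i.e.\ $h\in K$; this matches the paper's own convention in displaying $Q_n$, and the general case follows because $\overline{p_n}-\overline{r_n}u+\overline{q_n}v$ is unchanged if one simultaneously translates $h$ and the convergent by $\gamma_0=(g_1,g_2)$, a short computation using $\overline{g_2}+g_2=|g_1|^2$.
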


We can prove, using an identical method, that
\[\label{eq:tildeprq}
\overline{\frak{p}_n}-\overline{\frak{r}_n}u + \overline{\frak{q}_n} v = (-1)^{n-1} u_n \cdot \prod_{i=0}^{n-1} v_i.
\]

In addition to the $\asymp$ notation introduced earlier, we will also make use of the big-O notation. We say $f(x)=O( g(x))$ if there exists a constant $C>0$ such that  $|f(x)|< C|g(x)|$ for all $x$ under consideration.

\section{A brief digression on Hersonsky and Paulin}\label{section:HersonskyPaulin}

Hersonsky and Paulin investigated the Heisenberg group and diophantine approximation from a different perspective. They looked at the Heisenberg group modeled by $\Heis= \C \times \R$, not $\Sieg$.
They examine rational points given by
\(
h= \left( \frac{a+b \ii}{q_1} , \frac{c}{q_2} \right) \in \mathbb{Q}[\ii] \times \mathbb{Q}, \qquad a,b,c,q_1,q_2 \in \mathbb{Z}
\)
in lowest terms, along with a height function $H(h)=\operatorname{lcm}(q_1,q_2)$. We say a point is irrational if it cannot be represented in the above form.

Hersonsky and Paulin proved the following Khinchin-type theorem.

\begin{thm}[Theorem 3.5 in \cite{HersonskyPaulin}]\label{thm:HP}
Let $\psi:\mathbb{R}^+ \to \mathbb{R}^+$ be a slowly varying map\footnote{For full definitions, see \cite{HersonskyPaulin}.}.  Let $E_\psi$ be the set of irrational points $h\in \Heis$ such that there exist infinitely many rational points $h'\in \mathbb{Q}[\ii] \times \mathbb{Q}$ with 
\(
\operatorname{d}(h,h') \le \frac{\psi (H(h'))}{H(h')}.
\)
Then, $\lambda(E_\psi)=0$ (or $\lambda(E_\psi^c)=0$), where $\lambda$ is the Lebesgue measure, if and only if the integral  
\(
\int_1^\infty \psi(t)^2 \frac{dt}{t}
\) converges (resp., diverges).
\end{thm}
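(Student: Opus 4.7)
The plan is to attack both directions of the iff via a Borel-Cantelli framework, with the convergence direction following from the first Borel-Cantelli lemma combined with a counting estimate for rational points, and the divergence direction requiring input from the dynamics of the geodesic flow on complex hyperbolic space. As a setup, I would fix a compact fundamental region $F \subset \Heis$ and, for each dyadic level $k$, define $A_k$ to be the union over rational $h' \in F$ with $2^k \le H(h') < 2^{k+1}$ of the balls $B(h', \psi(H(h'))/H(h'))$; then $E_\psi \cap F = \limsup_k A_k$, and the task reduces to bounding $\mu(A_k)$ from above in the convergence case and from below in the divergence case.

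For the convergence direction, the first Borel-Cantelli lemma requires $\sum_k \mu(A_k) < \infty$. A direct bound gives $\mu(A_k) \le \sum_{h' \in R_k} \operatorname{vol}(B(h', \psi(H(h'))/H(h')))$, where $R_k$ denotes the set of rationals in $F$ of height in the $k$th dyadic range. Using that a Koranyi ball of radius $r$ in $\Heis$ has volume $\asymp r^4$, and counting rationals in $F$ of height $\sim N$ via the structure of $\operatorname{lcm}(q_1, q_2) = N$, the naive estimate lands on an integral of the form $\int \psi(t)^4 \, dt/t$ rather than the desired $\int \psi(t)^2 \, dt/t$. To recover the sharper integrand one must exploit that rational points of the same height $N$ cluster in the vertical coordinate of $\Heis$: the minimum Heisenberg distance between distinct rationals of height $N$ is of order $1/N$ when their $\C$-parts differ but only of order $1/\sqrt{N}$ when they differ only in the vertical coordinate, so the effective number of ``independent'' rationals contributing to $A_k$ is substantially smaller than $|R_k|$. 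Carrying out this refinement so that the exponent comes out correctly is the most delicate piece of this half.

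For the divergence direction I would pass to the complex hyperbolic viewpoint. Identify $\Heis$ with the boundary at infinity of $\Hyp^2_{\C}$ minus a point, equipped with the Cygan/visual metric that agrees with the Heisenberg distance up to a bounded factor. Rational points then correspond to the orbit of the distinguished boundary point under the arithmetic lattice $\Gamma = U(2,1;\Z[\ii])$, and the height $H(h')$ matches, up to bounded multiplicative constants, the horospherical depth of the cusp point associated to $h'$. The inequality $d(h, h') \le \psi(H(h'))/H(h')$ then becomes the statement that a geodesic ray in $\Gamma \backslash \Hyp^2_{\C}$ directed toward $h$ enters a shrinking family of cusp neighborhoods, and I would invoke a Sullivan-type Khintchine logarithm law for cusp excursions on the finite-volume manifold $\Gamma \backslash \Hyp^2_{\C}$. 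The exponent $2$ in $\int \psi(t)^2 \, dt/t$ then emerges as a function of the rank of the Heisenberg cusp parabolic subgroup together with the critical exponent of the lattice.

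The main obstacle, and the technical heart of the argument, is the calibration between the arithmetic height $\operatorname{lcm}(q_1, q_2)$ and the horospherical depth of the corresponding cusp-orbit point, together with verifying the mixing and quantitative-recurrence hypotheses of the Sullivan-type theorem for this specific arithmetic lattice. A fully elementary alternative avoiding complex hyperbolic dynamics would require proving quasi-independence of the approximation events across distinct rationals, which appears intractable because the non-Euclidean Heisenberg metric combined with the arithmetic of lcm heights produces non-trivial correlations between nearby rationals that resist straightforward decoupling.
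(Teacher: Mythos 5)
This statement is not proved in the paper at all: it is Theorem 3.5 of Hersonsky and Paulin \cite{HersonskyPaulin}, reproduced verbatim in Section \ref{section:HersonskyPaulin} purely as background, so that the author can contrast it with the paper's own new Khinchin-type result (Theorem \ref{thm:diophantineexponent}) and explain why the two are disjoint. There is therefore no ``paper's own proof'' against which to measure your proposal. If your assignment is to reconstruct an argument for one of the paper's theorems, this was the wrong target; the genuinely new results with proofs in the text are Theorems \ref{thm:truediophantineexponent}, \ref{thm:diophantineexponent}, \ref{thm:bestapproximant}, and \ref{thm:relativesize}.

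That said, a few remarks on the sketch itself. Your instinct that the divergence direction needs input from the geometry of $\Gamma \backslash \Hyp^2_{\C}$ and a Sullivan-type logarithm law for cusp excursions is exactly the right frame: Hersonsky and Paulin's paper is about Diophantine approximation on negatively curved manifolds and works through horoball packings and geodesic flow, not through elementary counting, and the relevant lattice is indeed $U(2,1;\Z[\ii])$ with the Heisenberg group arising as a cusp cross-section. Your observation on the convergence side is also well taken: a naive Borel--Cantelli estimate with Koranyi balls of volume $\asymp r^4$ and a crude count of rationals of height $\sim N$ lands on $\int \psi^4\,dt/t$ rather than $\int \psi^2\,dt/t$, so one must exploit overlaps or anisotropy of the rational points to close the gap. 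But be careful not to conflate the Hersonsky--Paulin setting with the paper's own: the paper's Theorem \ref{thm:diophantineexponent} uses the Siegel-model denominator $|q|$, not $\operatorname{lcm}(q_1,q_2)$ on $\C\times\R$, and is proved by an entirely elementary covering/counting argument that avoids geodesic flow completely. The discussion in Section \ref{section:HersonskyPaulin} of the paper explains why the two heights are incomparable (a Siegel rational $(r/q,p/q)$ maps to a point in $\Heis$ whose Hersonsky--Paulin height can be as large as $|q|^2$), which is precisely why neither theorem implies the other.
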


In particular, there are infinitely many solutions for almost all points $h$ if $\psi(t)=1$ but not if $\psi(t)=t^{-\epsilon}$, with $\epsilon>0$.

We emphasize that Theorem \ref{thm:HP} and Theorem \ref{thm:diophantineexponent} are disjoint results. If we start with a rational point $h=(r/q, p/q) \in \mathcal{S}$, this corresponds to a rational point
\(
\left( \frac{r\cdot \overline{q}}{|q|^2}, \frac{\Im(p\overline{q})}{|q|^2} \right) \in \mathcal{H},
\)
which could have a height as large as $|q|^2$, or possibly smaller if there is some cancellation. We know, by Theorem \ref{thm:truediophantineexponent}, that for any irrational point $h\in\mathcal{S}$ the convergents satisfy
\(
d\left( \left( \frac{r_n}{q_n}, \frac{p_n}{q_n} \right), h \right) \le \frac{C}{|q_n|},
\)
for some $C>0$.  Theorem \ref{thm:HP} then merely says that if we were to convert these convergents into points in $\mathcal{H}$, the height must almost always be larger than, say, $|q_n|^{1-\epsilon}$.

\section{Proof of Theorem \ref{thm:relativesize}}

Recall from Lemma \ref{lemma:fracq} that
\(
q_n + \frak{q}_n u_n - q_{n-1} v_n = \frac{(-1)^n}{vv_1 v_2 \dots v_{n-1}}.
\)

We want to find a similar formula for $q_n$ itself.  Define
\[\label{eq:poweri}
(q_n^{(i)}:r_n^{(i)}: p_n^{(i)})
:=
A_{\gamma_{i+1}} A_{ \gamma_{i+2}} \dots A_{\gamma_n} (1:0:0),
\]
and let $v'_i=p_n^{(i)}/q_n^{(i)}$.  Equation \eqref{eq:poweri} and the definition of $A_\gamma$ also imply that $q_n^{(i)}= -p_n^{(i-1)}$ and that $p_n^{(n-1)}= -1$, so that
\(
q_n^{(i)} = \frac{(-1)^{(n-i)}}{v'_i v'_{i+1} \dots v'_{n-1}}, \qquad 0 \le i \le n-1
\)
Since $Q_n =A_{\gamma_1}A_{\gamma_2}\dots A_{\gamma_n}$, we have $q_n^{(0)}=q_n$, $p_n^{(0)}=p_n$ and $r_n^{(0)}=r_n$, so that
\(
q_n = \frac{(-1)^n}{v_0' v'_1 v'_2 \dots v'_{n-1}}.
\)

By \eqref{eq:poweri}, we see that $(q_n^{(i)}:r_n^{(i)}: p_n^{(i)})$ is the $n-i$th convergent to the point $T^i h$, since the first $n-i$ digits of $T^i h $ are precisely $\{\gamma_{i+1}, \gamma_{i+2},\dots \gamma_n\}$. By Proposition \ref{prop:distance}, we therefore have for $0\le i \le n-1$
\[\label{eq:distancelater}
\operatorname{d}(h_i,h_i') =  \left| \frac{v_i \cdot v_{i+1}\cdot v_{i+2} \cdots v_{n}}{q_n^{(i)}} \right|^{1/2}
\]
so that
\(
\frac{\operatorname{d}(h_i,h_i')}{\Norm{h_i}} &= \frac{\operatorname{d}(h_i,h_i')}{|v_i|^{1/2}}\\
&= \left| \frac{v_{i+1}\cdot v_{i+2} \cdots v_{n}}{q_n^{(i)}} \right|^{1/2}\\
&\le \operatorname{rad}(K)^{n-i} .
\)
where the last inequality follows because $|v_i|^{1/2}=\Norm{h_i} \le \operatorname{rad}(K)$ and $\norm{q_n^{(i+1)}}\ge 1$ (see \cite[Lemma~3.20]{LV}).
Combining \eqref{eq:translated}, \eqref{eq:inversed}, and \eqref{eq:distancelater}, we have
\(
\frac{\operatorname{d}(h_i,h_i')}{\Norm{h'_i}} &=\Norm{h_i}  \cdot \operatorname{d}(\iota h_i,\iota h_i')= \Norm{h_i} \cdot \operatorname{d}(\gamma_{i+1}^{-1}* \iota h_i,\gamma_{i+1}^{-1}* \iota h_i') \\
&=  \Norm{h_i}  \cdot \operatorname{d}( h_{i+1}, h_{i+1}')= |v_i|^{1/2}  \cdot \operatorname{d}( h_{i+1}, h_{i+1}')\\
&= \left| \frac{v_{i}\cdot v_{i+1} \cdots v_{n}}{q_n^{(i+1)}} \right|^{1/2}\\
&\le \operatorname{rad}(K)^{n-i+1} ,
\)

In particular, this gives
\(
\left| \frac{v'_i}{v_i} \right| &\le \left( \frac{\Norm{h_i'}}{\Norm{h_i}} \right)^2 = \left( \frac{\Norm{h_i}+ d(h_i,h_i')}{\Norm{h_i}} \right)^2 \\
&\le \left( 1+  \operatorname{rad}(K)^{(n-i)}  \right)^2
\)
and likewise
\(
\left| \frac{v_i}{v'_i} \right| \le \left( 1+  \operatorname{rad}(K)^{(n-i+1)}  \right)^2 \le  \left( 1+  \operatorname{rad}(K)^{(n-i)}  \right)^2
\)
Since $1+x \le e^x$, we have that
\(
\left| \prod_{i=0}^{n-1} \frac{v'_i}{v_i} \right| &\le \prod_{i=0}^{n-1} \left( 1 +\operatorname{rad}(K)^{(n-i)}  \right)^2 \le \exp\left( 2\sum_{i=0}^{n-1} \operatorname{rad}(K)^{(n-i)}  \right)\\
&< \exp\left(  2\sum_{i=1}^\infty \rad(K)^{i} \right)= \exp\left( \frac{2\rad(K)}{1-\rad(K)}\right)
\)
so that the product is uniformly bounded away from $\infty$ for \emph{all} $n$. (A similar argument applied to the reciprocal of this fraction shows that it must be bounded away from $0$ as well.)

Hence, we have
\(
\left| q_n + \frak{q}_n u_n - q_{n-1} v_n \right| = \left| \frac{1}{v_0 v_1 v_2 \dots v_{n-1}} \right| \asymp  \left| \frac{1}{v'_0 v'_1 v'_2 \dots v'_{n-1}} \right| = |q_n|
\)
with implicit constant 
\(
\mathcal{R}_K= \prod_{n=1}^\infty \left( 1+ \rad(K)^n \right)^2 
\)
 as desired.

\section{Proof of Theorem \ref{thm:truediophantineexponent}}

By Proposition \ref{prop:distance} and Lemma \ref{lemma:fracq}, we have
\(
\operatorname{d}\left( \left( \frac{r_n}{q_n}, \frac{p_n}{q_n} \right), h \right) = \left| \frac{1}{\overline{q_n}(q_{n+1}+\frak{q}_{n+1}u_{n+1}-q_nv_{n+1})} \right|^{1/2} = \left| \frac{v_n}{\overline{q_n}(q_{n}+\frak{q}_{n}u_{n}-q_{n-1}v_{n})} \right|^{1/2}.
\)
Using Theorem \ref{thm:relativesize} and the fact that $|v_n|^{1/2} \le \rad(K)$ completes the proof of this theorem.

\section{Proof of Theorem \ref{thm:diophantineexponent}}

Recall that we want to show that the set $A$ defined by the set of points $h\in \mathcal{S}$ such that
\(
 d\left( \left( \frac{r}{q}, \frac{p}{q} \right), h \right) \le \frac{C}{|q|^{1+\epsilon}} \text{ for infinitely many points }  \left( \frac{r}{q}, \frac{p}{q} \right) \in \mathcal{S}(\Q)
\)
in lowest terms, has measure $0$, for some fixed $C, \epsilon>0$.

In the interest of generality, let us instead consider a positive function $\phi(m)$ with $\limsup \phi(m) = 0$, and have $A$ be defined as the set of points  $h\in \mathcal{S}$ such that
\(
 d\left( \left( \frac{r}{q}, \frac{p}{q} \right), h \right) \le \phi(|q|^2) \text{ for infinitely many points }  \left( \frac{r}{q}, \frac{p}{q} \right) \in \mathcal{S}(\Q)
\)
in lowest terms. We want to know what conditions on $\phi(m)$ will cause $A$ to have measure $0$.

To show this, we will make use of a Borel--Cantelli type construction as well as the fact that $\operatorname{Vol}(B_r)=r^4$. 

It suffices to show that $A_K = A \cap K$ has measure $0$, since 
\(A = \bigcup_{\gamma \in \mathcal{H}(\mathbb{Z})} \gamma * A_K\)
would then be a countable union of measure $0$ sets and hence have measure $0$ itself. For $m\in \mathbb{N}$, let $A_m$ denote the set of points $h \in K$ such that there exists a rational point $\left(\frac{r}{q}, \frac{p}{q} \right)$ in lowest terms with $|q|^2 = m$ and 
\(
\operatorname{d} \left( h,  \left(\frac{r}{q}, \frac{p}{q} \right) \right) \le \phi(m).
\)
By construction, we have
\(
A_K = \bigcap_{M=1}^\infty \bigcup_{m \ge M} A_m.
\)

For any $M \in \mathbb{N}$, we have
\(
\mu\left( A_K \right) \le \sum_{m \ge M}\mu\left( A_m \right),
\)
where $\mu$ denotes the inherited measure on $\Sieg$. So if the sum of $\mu(A_m)$ converges, then we can choose $M$ large enough to make the sum on the right-hand side as small as we like, and therefore would have that $A_K$ has measure $0$.

It suffices to show that $\mu(A_m)$ is bounded by a function whose sum converges. We shall assume a worst case scenario: $A_m$ consists of \emph{disjoint} balls or radius $\phi(m)$ around all rational points $\left( \frac{r}{q}, \frac{p}{q} \right)$ with $|q|^2=m$ in a larger set $K'$ with the rational points \emph{not necessarily in lowest terms}. This larger set is the set of all points within distance $\sup_{m\in \mathbb{N}} \phi(m)$ of the fundamental domain $K$. We must consider rational points in $K'$, because points in $K$ could be approximated by rational points outside of $K$. 

So we have that
\(
\mu(A_m)&\le  \operatorname{Vol} \left(B_{\phi(m)}\right) \cdot \#\left\{   \left( \frac{r}{q}, \frac{p}{q} \right)\in K' \mid |q|^2 = m\right\}\\
&= O\left( \phi(m)^4 \right) \cdot \#\left\{   \left( \frac{r}{q}, \frac{p}{q} \right)\in K' \mid |q|^2 = m\right\}
\)
We will focus on this latter term and try to obtain an upper bound on how many rational points are in $K'$ with $|q|^2=m$.

First, note that  $\left( \frac{r}{q}, \frac{p}{q}\right)$ must be in the set $K'$, which has bounded radius. So $r$ must a complex integer with norm bounded by $O(|q|)$. So for a fixed $m$  there are at most $O(m)$ possibilities for $r$.

We will suppose that $m$ and $r$ are fixed for a while and count how many possible choices of $q$ and $p$ there are. The key facts are that $K'$ is bounded, so $|p/q|=O(1)$, and that $|r|^2- 2 \operatorname{Re}(\overline{q}p)=0$ by \eqref{eq:projectivemodel}. If we write $q=a+b\ii$ and $p=c+d\ii$, then the relation becomes
\[\label{eq:acbd}
 |r|^2 =2( ac+bd).
\]

We now split into cases based on the structure of $q$.

\textbf{Case 1: $q$ is completely real or completely imaginary.}  If $a$ (or $b$) equals $0$, then we immediately know several things. First, $m$ must be a perfect square and $b$ (resp., $a$) must equal $\pm \sqrt{m}$. So there are at most $4$ possible values for $q$. Second, from \eqref{eq:acbd}, we see that $d$ (resp., $c$) has at most one possible value, but $c$ (resp., $d$) is free to vary provided $|p|=O(|q|)$. Thus there are at most $O(m^{1/2})$ possible choices for $p$. 

Therefore, there are at most $O(m^{1/2})$ total possibilities for $p$ and $q$ in this case.

\textbf{Case 2: $q$ is neither completely real nor completely imaginary and the GCD of $a$ and $b$ is $g$.} Since $a^2+b^2=m$ and the GCD of $a$ and $b$ is $g$, we have that the number of distinct $q$ satisfying these conditions is $r_2(m/g^2)$, where $r_2(n)$ is the number of ways of writing $n$ as the sum of two squares. It is well-known that $r_2(n)=O(n^\epsilon)$ for any fixed $\epsilon>0$.  Note as well that $g^2$ must divide $m$

To count the number of possible $p$ for any given $q$, we rewrite \eqref{eq:acbd} to put $c$ in terms of $d$:
\[\label{eq:abcd2}
c=\frac{1}{2a}|r|^2 - \frac{b}{a}d.
\]
For any integer $d$, there is clearly one integer $c$ satisfying the above equation, thus counting the number of $p$ is equivalent to counting the number of possible $d$.  More importantly, there is a valid solution to \eqref{eq:abcd2} only if $|r|^2-2bd$ is divisible by $2a$. Since $2g$ divides $|r|^2$, this implies that $d$ must fall into a particular residue class modulo $|a|/g$ in order for \eqref{eq:abcd2} to have a solution.

At the same time, we know that $c^2+d^2 \le R \cdot m$ for some constant $R$.  Combining this with \eqref{eq:abcd2}, we obtain
\(
R \cdot m \ge \left( \frac{|r|^2-2bd}{2a} \right)^2 +d^2.
\)
An application of the quadratic formula shows that any integer solutions to the above inequality must have $d$ in the following interval:
\(
\left[  \frac{b|r|^2-|a|\sqrt{4Rm^2-|r|^4}}{2m} ,  \frac{b|r|^2+|a|\sqrt{4Rm^2-|r|^4}}{2m} \right].
\)
This interval has length bounded by
\(
 \frac{|a|\sqrt{4Rm^2-|r|^4}}{m} \le 2|a|\sqrt{R}
\)
But as noted earlier, $d$ must be in a specific residue class modulo $|a|/g$, which implies that there at most $O(g)$ values that $p$ can take.

Thus, there are at most $O(g \cdot r_2(m/g^2))$ possible choices of $p$ and $q$ in this case.

Therefore, we have 
\[
\sum_{m=1}^\infty \mu(A_m) &=  \sum_{m \text{ is a perfect square}}O\left( \phi(m)^4 r_2(m) m^{3/2}\right)\notag \\ &\qquad  + \sum_{m=1}^\infty O\left( \phi(m)^4  m\cdot \sum_{g^2 \mid m} g \cdot r_2(m/g^2) \right). \label{eq:upperbound}
\]

Let $\phi(m)=C/m^{(1+\epsilon)/2}$ for any $C,\epsilon>0$, as in the statement of Theorem \ref{thm:diophantineexponent}, and recall that $r_2(n)=O(n^{\epsilon})$.

It is clear that the first sum
\(
 \sum_{m \text{ is a perfect square}} \phi(m)^4 r_2(m) m^{3/2}=  \sum_{\ell =1}^ \infty \phi(\ell^2)^4 r_2(\ell^2) \ell^{3}
\)
converges for this choice of $\phi(m)$, since each summand will be $O(\ell^{-1-2\epsilon})$ . For the second sum, note that we can exchange the order of summation
\[\label{eq:rearranged}
 \sum_{m=1}^\infty \phi(m)^4  m\cdot \sum_{g^2 \mid m} g \cdot r_2(m/g^2)  = \sum_{g=1}^\infty g \sum_{d=1}^\infty \phi(g^2 d)^4\cdot g^2 d \cdot r_2(d)
\]
provided that either of these sums converges (since all the terms are positive). Consider the interior sum on the right-hand side of \eqref{eq:rearranged}. Each summand is bounded by $O(g^{-2-4\epsilon} d^{-1-\epsilon})$, so the entire interior sum is bounded uniformly by $O(g^{-2-4\epsilon})$. Thus
\(
 \sum_{g=1}^\infty g \sum_{d=1}^\infty \phi(g^2 d)^4\cdot g^2 d \cdot r_2(d) = \sum_{g=1}^\infty O\left( g^{-1-4\epsilon} \right) = O(1).
\)
Thus the theorem is proved.

We can see that any function $\phi$ for which the sums in \eqref{eq:upperbound} are bounded would cause the corresponding set $A$ to have measure $0$.

\section{Proof of Theorem \ref{thm:bestapproximant}}

We shall prove the slightly more powerful result:
\begin{prop}\label{prop:bestapproximant}
Let $h\in \Sieg$ and let $\left( \frac{r_n}{q_n}, \frac{p_n}{q_n} \right)$ be its $n$th rational approximant.  Let $(Q: R: P) \in \mathbb{Z}[\ii]^3\cap \mathcal{S}$ such that
\(
\left| \overline{P} - \overline{R} u + \overline{Q} v \right| &= x_1 \left| \overline{p_n} - \overline{r_n} u +\overline{q_n} v \right|\\
\left| Q \right| &= x_2 \left| q_n \right|,
\)
then 
\(\sqrt{x_1}+\sqrt{x_2} \ge |v_n|^{-1}\mathcal{R}_K^{-1} .
\)
\end{prop}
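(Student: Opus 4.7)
The plan is to pass to the ``basis of $Q_n$'' by setting $(a, b, c)^T := Q_n^{-1}(Q, R, P)^T$. Since $Q_n \in U(2,1;\Z[\ii])$ has integer-entried inverse $Q_n^{-1} = J Q_n^{\dagger} J$, the coefficients lie in $\Z[\ii]$; and because $Q_n$ preserves the Hermitian form defining $\Sieg$, the triple $(a,b,c)$ also satisfies the lightcone equation $|b|^2 = 2\Re(\overline{a}c)$.  The identity $Q_n(1,u_n,v_n)^T = \lambda(1,u,v)^T$ (with $\lambda = q_n + \frak{q}_n u_n - q_{n-1} v_n$ from Lemma \ref{lemma:fracq}) then yields
\(
\beta(Q,R,P) = \lambda^{-1}\left(\overline{a}v_n - \overline{b}u_n + \overline{c}\right),
\)
so the hypotheses translate to $x_1 |v_n| = |A|$ and $x_2 |q_n| = |Q|$, where $A := \overline{a}v_n - \overline{b}u_n + \overline{c}$ and $Q = aq_n + b\frak{q}_n - c q_{n-1}$.

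Next I would make the shift $b' := b - a u_n$, $c' := c - a v_n$.  A short calculation using $\overline{v_n} = |u_n|^2 - v_n$ (because $h_n \in \Sieg$) yields the pleasing identities
\(
\overline{A} = c' - b'\overline{u_n}, \qquad Q = a\lambda + \frak{q}_n b' - q_{n-1} c',
\)
and transforms the lightcone condition into $|b'|^2 = 2\Re(\overline{a}\,\overline{A})$.  From this we immediately get $|b'| \le \sqrt{2|a||A|}$ and, via $|u_n|^2 \le 2|v_n|$, the bound $|c'| \le |A| + 2\sqrt{|a||A||v_n|}$.

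I would then split on the integer $a$.  When $a = 0$ the lightcone forces $b = 0$, so $(Q,R,P) = -c\,(q_{n-1}, r_{n-1}, p_{n-1})$ with $|c| \ge 1$; one reads off $|A| = |c|$ and hence $\sqrt{x_1} \ge 1/\sqrt{|v_n|}$, which dominates $|v_n|^{-1}\mathcal{R}_K^{-1}$ since $|v_n| \le \rad(K)^2 \le \mathcal{R}_K^2$.  When $a \ne 0$, so $|a| \ge 1$, the triangle inequality applied to $Q = a\lambda + \frak{q}_n b' - q_{n-1} c'$, combined with the bounds on $|b'|$ and $|c'|$ above, with $|\lambda| \asymp |q_n|$ (Theorem \ref{thm:relativesize}), and with $|q_{n-1}| \asymp |v_n q_n|$ (Corollary \ref{cor:successivesize}), yields a lower bound on $|Q|/|q_n| = x_2$ driven by the dominant $a\lambda$ term; assembled with the $\sqrt{x_1}$ contribution this gives the target inequality.

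The hard part will be controlling the $|\frak{q}_n b'|$ error term, since no uniform estimate for $|\frak{q}_n|$ is supplied by the cited results.  I expect this will require either (i) an auxiliary analog of Theorem \ref{thm:relativesize} for the ``middle column'', proved by tracking $\frak{q}_n^{(i)} := e_1^T A_{\gamma_{i+1}} \cdots A_{\gamma_n}(0,1,0)^T$ and its product expansion, or (ii) eliminating $b'$ via $b'\overline{u_n} = c' - \overline{A}$ and invoking the conjugate-transpose form of Lemma \ref{lemma:fracq}, which rewrites $\overline{u_n}(\frak{q}_n - q_{n-1}\overline{u_n}) = \overline{\lambda} - \overline{q_n} - \overline{q_{n-1}} v_n$ and reduces everything to quantities already bounded by $|q_n|$ and $|\lambda|$.
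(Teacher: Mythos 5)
Your proposal takes a genuinely different route from the paper's and leaves a real gap that you yourself flag.

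The paper works with $Q_{n+1}^{-1}$ rather than $Q_n^{-1}$: it sets $(a,b,c)^T := Q_{n+1}^{-1}(Q,R,P)^T$. This choice is not cosmetic. With $Q_{n+1}$, the third column is $(-q_n,-r_n,-p_n)^T$, so $Q = aq_{n+1} + b\frak q_{n+1} - cq_n$ and hence $x_2 = |Q|/|q_n| = |a|\cdot\bigl|\tfrac{q_{n+1}}{-q_n} - \bigl(-\tfrac ba\bigr)\tfrac{\frak q_{n+1}}{-q_n} + \tfrac ca\bigr|$. By the formula \eqref{eq:distancedefined}, and using that $(-\overline{q_n}:\overline{\frak q_{n+1}}:\overline{q_{n+1}})$ is a point of $\Sieg$ (it is $Q_{n+1}^{-1}(0:0:1)$, up to a conjugation/sign), this exhibits $x_2/|a|$ as the \emph{square of a Heisenberg distance} from $(-b/a,c/a)$ to a fixed point built out of the convergent data. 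Similarly $x_1/|a|$ is the squared distance from $(-b/a,c/a)$ to $(-u_{n+1},v_{n+1})$. The paper then applies the triangle inequality in $(\Sieg,d)$ with $(-b/a,c/a)$ as the middle vertex; the unknown point and the troublesome $\frak q_{n+1}$ drop out because the endpoint distance $d\bigl((\overline{\frak q_{n+1}/(-q_n)},\overline{q_{n+1}/(-q_n)}),(-u_{n+1},v_{n+1})\bigr)$ evaluates, by \eqref{eq:distancedefined} again, to $\bigl|\tfrac{q_{n+1}+\frak q_{n+1}u_{n+1}-q_nv_{n+1}}{q_n}\bigr|^{1/2}$, which Lemma \ref{lemma:fracq} and Theorem \ref{thm:relativesize} control with no separate estimate on $\frak q_{n+1}$ ever needed.

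Your version, based on $Q_n^{-1}$ plus the shift $b' = b - au_n$, $c' = c - av_n$, does not produce this cancellation: because the third column of $Q_n$ is $(-q_{n-1},\dots)^T$ rather than $(-q_n,\dots)^T$, the expression $Q/q_n = a + (b/a)\frak q_n/q_n - (c/a)q_{n-1}/q_n$ has a ``$+1$'' in place of a ratio of consecutive denominators, and so it is not a squared distance between two points of $\Sieg$ of the type \eqref{eq:distancedefined}. That is exactly why you end up needing a bound on $\frak q_n$ (or a surrogate), and you correctly identify that no such bound is supplied by the cited results. Neither of the two remedies you sketch is carried out, so the argument as written does not close. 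Additionally, your treatment of the $a=0$ case only delivers $\sqrt{x_1}\ge |v_n|^{-1/2}$, and $|v_n|^{-1/2}\ge |v_n|^{-1}\mathcal R_K^{-1}$ is equivalent to $|v_n|^{1/2}\ge\mathcal R_K^{-1}$, which is not guaranteed since $|v_n|$ can be arbitrarily small; you cannot dispose of this case by the crude comparison $|v_n|\le\rad(K)^2\le\mathcal R_K^2$. To repair the proof, pass to $Q_{n+1}^{-1}$, recognize the two hypotheses as defining squared Heisenberg distances from the common point $(-b/a,c/a)$, and invoke the triangle inequality for $d$; $\frak q_{n+1}$ then never needs to be estimated on its own.
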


\begin{proof}[Proof of Proposition \ref{prop:bestapproximant}]
Since $\operatorname{U}(2,1;\mathbb{Z}[\ii])$ is closed under taking inverses, there exists a point $(a:b:c)\in \mathbb{Z}[\ii]^3 \cap \mathcal{S}$ such that
\[\label{eq:abcrelation}
Q_{n+1} \left( \begin{array}{c} a \\ b \\ c \end{array} \right) = 
\left( \begin{array}{ccc}
q_{n+1} & \frak{q}_{n+1} & - q_{n}\\
r_{n+1} & \frak{r}_{n+1} & - r_{n} \\
p_{n+1} & \frak{p}_{n+1} & - p_{n}
\end{array}
\right) \left( \begin{array}{c} a \\ b \\ c \end{array} \right)  = \left( \begin{array}{c} Q \\ R \\ P \end{array} \right).
\]
From this relation, \eqref{eq:prq}, and \eqref{eq:tildeprq}, we can derive the equations
\[\label{eq:overlineQ}
Q = aq_{n+1}+b \frak{q}_{n+1} - c q_n,
\]
and
\(
\overline{P}-\overline{R}u+\overline{Q}v &=\overline{a} \left( \overline{p_{n+1}} - \overline{r_{n+1}}u +\overline{q_{n+1}}v\right) + \overline{b} \left( \overline{\frak{p}_{n+1}}-\overline{\frak{r}_{n+1}} u + \overline{ \frak{q}_{n+1}} v \right) \\
&\qquad- \overline{c} \left( \overline{p_n} -\overline{ r_n} u + \overline{q_n} v \right)\\
&= \overline{a}(-1)^{n+1}(v_0 v_1 \dots v_{n+1}) +\overline{b}(-1)^n (v_0v_1 \dots v_n u_{n+1})\\
&\qquad - \overline{c} (-1)^n(v_0 v_1 \dots v_n)\\
&= (-1)^n (v_0 v_1 \dots v_n) \left( -\overline{a}v_{n+1} + \overline{b} u_{n+1}-\overline{c}  \right) ,
\)
and, by applying \eqref{eq:prq} again, we get
\[\label{eq:fracQ}
\overline{P}-\overline{R}u+\overline{Q}v = (-1) \left( \overline{p_n} -\overline{ r_n} u + \overline{q_n} v \right) \left( \overline{a}v_{n+1}- \overline{b} u_{n+1}+\overline{c}  \right).
\]

If $a=0$, then $b$ must also equal $0$ in order for the point $(a:b:c)$ to be in the set $\mathcal{S}$. In this case, the conclusion of the proposition follows trivially from \eqref{eq:overlineQ} and \eqref{eq:fracQ}. Therefore we assume that $|a|\ge 1$ for the remainder of this proof.

From \eqref{eq:overlineQ} and \eqref{eq:distancedefined}, we have that
\(
x_2 &= |a| \cdot \left|   \left(\frac{q_{n+1}}{-q_n}\right) -\left(-  \frac{b}{a}\right) \cdot \left(\frac{\frak{q}_{n+1}}{-q_n}\right)+ \frac{c}{a}  \right|\\
&= |a| \cdot d\left(\left( \overline{\left( \frac{\frak{q}_{n+1}}{-q_n}\right) } ,  \overline{\left( \frac{q_{n+1}}{-q_n}\right) }  \right) ,  \left(-\frac{b}{a}, \frac{c}{a} \right) \right)^2.
\)
Note that the point $(-\overline{q_n}:\overline{\frak{q}_{n+1}}:\overline{ q_{n+1}})$ is a point in $\mathcal{S}$. In particular, $(-\overline{q_{n}}: -\overline{\frak{q}_{n+1}}: \overline{q_{n+1}})$ corresponds to the point $Q_{n+1}^{-1} (0:0:1) = J Q_{n+1}^\dagger J (0:0:1)$ and thus is in $\Sieg$; and if $(1:u:v)\in \Sieg$, then $(1:-u:v)\in \Sieg$ too.

 We also have, by the same argument
\(
x_1 =   |a| \cdot d\left(\left(-\frac{b}{a}, \frac{c}{a} \right),   \left( -u_{n+1},v_{n+1}  \right)  \right)^2
\)

But then by the triangle inequality,
\(
\frac{1}{|a|}\left(\sqrt{x_1}+ \sqrt{x_2} \right)
&=d\left( \left(-\frac{b}{a}, \frac{c}{a} \right) ,\left( \overline{\left( \frac{\frak{q}_{n+1}}{-q_n}\right) } , \overline{\left( \frac{q_{n+1}}{-q_n}\right) }  \right) \right)\\
&\qquad + d\left(\left(-\frac{b}{a}, \frac{c}{a} \right),   \left(-u_{n+1},v_{n+1}  \right)  \right)\\
&\ge d\left( \left( \overline{\left( \frac{\frak{q}_{n+1}}{-q_n}\right) } ,  \overline{\left( \frac{q_{n+1}}{-q_n}\right) }  \right) ,   \left( -u_{n+1},v_{n+1}  \right)  \right)\\
&= \left| v_{n+1} -\frac{\frak{q}_{n+1}}{q_n} u_{n+1} - \frac{q_{n+1}}{q_n} \right|^{1/2}\\
&= \left| \frac{q_{n+1}+\frak{q}_{n+1} u_{n+1} - q_n v_{n+1}   }{q_n} \right|^{1/2}.
\)
By Theorem \ref{thm:relativesize}, this gives
\(
\frac{1}{|a|}\left(\sqrt{x_1}+ \sqrt{x_2} \right) \ge \frac{1}{|v_{n}| \mathcal{R}_K}
\)

This completes the proof of the theorem.

\end{proof}

\begin{proof}[Proof of Theorem \ref{thm:bestapproximant}]
Suppose 
\(
|Q| < \frac{1}{2\rad(K)^2\mathcal{R}_K^2} |q_n|,
\)
then by Proposition \ref{prop:bestapproximant}, we would have $x_1/x_2> 1$.  Thus, 
\(
\operatorname{d}\left( \left( \frac{R}{Q}, \frac{P}{Q} \right), h \right) &= \left|\frac{ \overline{P} - \overline{R} u + \overline{Q} v}{Q} \right| ^{1/2}\\
&>  \left|\frac{ \overline{p_n} - \overline{r_n} u + \overline{q_n} v}{q_n} \right| ^{1/2} =\operatorname{d}\left( \left( \frac{r_n}{q_n}, \frac{p_n}{q_n} \right), h \right) ,
\)
as desired.\end{proof}

\section{Acknowledgements}

The author wishes to thank A. Lukyanenko for his continued insights.

\end{document}